\newcommand{\RRT}{\mbox{RRT}}
\newcommand{\RT}{\mbox{RT}}
\renewcommand{\tt}[1]{\textnormal{\texttt{#1}}}
\newtheorem{theorem}{Theorem}
\newtheorem{corollary}[theorem]{Corollary}
\newtheorem{lemma}[theorem]{Lemma}
\newtheorem{observation}[theorem]{Observation}
\theoremstyle{definition}
\title{The repetition threshold for binary rich words}
\author{James D. Currie\thanks{The work of James D. Currie is supported by the Natural Sciences and Engineering Research Council of Canada (NSERC), [funding reference number 2017-03901].}
\and Lucas Mol
\and Narad Rampersad\thanks{The work of Narad Rampersad is supported by the Natural Sciences and Engineering Research Council of Canada (NSERC), [funding reference number 2019-04111].}
}
\affiliation{Department of Mathematics and Statistics, 
University of Winnipeg \\
515 Portage Avenue,
Winnipeg, Manitoba R3B 2E9 (Canada)}
\keywords{rich word, repetition threshold, critical exponent, palindrome}
\begin{document}
\publicationdetails{22}{2020}{1}{6}{5791}
\maketitle
\begin{abstract}
  A word of length $n$ is rich if it contains $n$ nonempty palindromic factors.  An infinite word is rich if all of its finite factors are rich.  Baranwal and Shallit produced an infinite binary rich word with critical exponent $2+\sqrt{2}/2$ ($\approx 2.707$) and conjectured that this was the least possible critical exponent for infinite binary rich words (i.e., that the repetition threshold for binary rich words is $2+\sqrt{2}/2$).  In this article, we give a structure theorem for infinite binary rich words that avoid $14/5$-powers (i.e., repetitions with exponent at least $2.8$).  As a consequence, we deduce that the repetition threshold for binary rich words is $2+\sqrt{2}/2$, as conjectured by Baranwal and Shallit.  This resolves an open problem of Vesti for the binary alphabet; the problem remains open for larger alphabets.
\end{abstract}

\section{Introduction}

A \emph{palindrome} is a word that is equal to its reversal, i.e., it reads the same forwards and backwards.  It is well-known that a word of length $n$ contains at most $n$ distinct nonempty palindromes~\cite{DJP01}.  Words of length $n$ that contain $n$ distinct nonempty palindromes are called \emph{palindrome-rich}, or simply \emph{rich}.  An infinite word is \emph{rich} if all of its factors are rich.  Rich words were introduced in~\cite{BHNR04} (where they were called \emph{full} words), were first studied systematically in~\cite{GJWZ09}, and have since been studied by many authors~\cite{DGZ08,PS13,RR09,V14,V17}.

Let $u$ be a finite nonempty word, and let $u=u_1\dots u_{n},$ where the $u_i$ are letters.  A positive integer $p$ is a \emph{period} of $u$ if $u_{i}=u_{i+p}$ for all $1\leq i\leq n-p$.  Let $e=|u|/p$ and let $z$ be the prefix of $u$ of length $p$.  Then we say that $e$ is an \emph{exponent} of $p$, and write $u=z^e$.  We say that $u$ is \emph{primitive} if the only integer exponent of $u$ is $1$.

For a real number $\alpha\geq 1$, a finite or infinite word $w$ is called \emph{$\alpha$-free} if it contains no nonempty factor of exponent greater than or equal to $\alpha$.  Otherwise, we say that $w$ \emph{contains} an $\alpha$-power.  The \emph{critical exponent} of $w$ is the supremum of the set of all rational numbers $\alpha$ such that $w$ contains an $\alpha$-power.
The \emph{repetition threshold} for a language $L$ is the infimum of the set of all real numbers $\alpha>1$ such that there is an infinite $\alpha$-free word in $L$.
In other words, the repetition threshold for $L$ is the smallest possible critical exponent among all infinite words in $L$.

The repetition threshold for the language of all words on a fixed alphabet of size $k$, denoted $\RT(k)$, was introduced by Dejean~\cite{Dej72}, who conjectured that
\[
\RT(k)=\begin{cases}
2, &\text{if $k=2$;}\\
7/4, &\text{if $k=3$;}\\
7/5, &\text{if $k=4$;}\\
k/(k-1), &\text{if $k \geq 5$.}
\end{cases}
\]
This conjecture was eventually proven through the work of many authors~\cite{Carpi07,CR09,CR11,Dej72,MC07,MO92,Pansiot84,Rao11}.  
Rampersad et al.~\cite{RSV19} recently proposed the problem of determining the repetition threshold for the language of \emph{balanced} words over a fixed alphabet of size $k$.  Both Rampersad et al.~\cite{RSV19} and Baranwal and Shallit~\cite{BS19balanced} have made some progress on this problem.

We are concerned with repetitions in rich words.  Vesti~\cite{V17} proposed the problem of determining the repetition threshold for the language of rich words over $k$ letters, denoted $\RRT(k)$. Vesti noted that $2\leq \RRT(k)\leq 2+1/(\varphi_k-1)$ for all $k\geq 2$, where $\varphi_k$ is the generalized golden ratio.  The lower bound follows from the fact that every infinite rich word contains a square~\cite{PS13}.  The upper bound follows from the fact that the $k$-bonacci word is rich and has critical exponent $2+1/(\varphi_k-1)$~\cite{GJ09}. Baranwal and Shallit~\cite{BS19} demonstrated that there is an infinite binary rich word with critical exponent $2+\sqrt{2}/2$, and conjectured that this is the smallest possible critical exponent among all infinite binary rich words, i.e., that $\RRT(2)=2+\sqrt{2}/2$.  In this article, we prove a structure theorem for infinite $14/5$-free binary rich words.  We use this theorem to confirm Baranwal and Shallit's conjecture.

We use the following notation throughout the paper.  Let $\Sigma_k=\{\tt{0},\tt{1},\dots,\tt{k-1}\}$. Define $f:\Sigma_3^*\rightarrow\Sigma_2^*$ and $g,h:\Sigma_3^*\rightarrow\Sigma_3^*$ by
\begin{align*}
    f(\tt{0})&=\tt{0}\\
    f(\tt{1})&=\tt{01}\\
    f(\tt{2})&=\tt{011}\\[5pt]
    g(\tt{0})&=\tt{011}\\
    g(\tt{1})&=\tt{0121}\\
    g(\tt{2})&=\tt{012121}\\[5pt]
    h(\tt{0})&=\tt{01}\\
    h(\tt{1})&=\tt{02}\\
    h(\tt{2})&=\tt{022}
\end{align*}
Note that $f(h^\omega(\tt{0}))$ is the infinite binary rich word with
critical exponent $2+\sqrt{2}/2$ constructed by Baranwal and
Shallit~\cite{BS19}.  Also, note that $f$, $g$, and $h$ are
injective.  Furthermore, these three morphisms all belong to the
well-studied family of \emph{class~$P$} morphisms \cite{HKS95}, which are
connected to the study of rich words \cite{BPS11}.

We prove the following structure theorem for infinite $14/5$-free binary rich words.\footnote{Note that $g=\tilde{g}\circ h$, where $\tilde{g}:\Sigma_{3}^*\rightarrow \Sigma_2^*$ is defined by $\tilde{g}(\tt{0})=\tt{01}$, $\tilde{g}(\tt{1})=\tt{1}$, and $\tilde{g}(\tt{2})=\tt{21}.$  Thus, in the statement of Theorem~\ref{structure}, one could replace $g$ with $\tilde{g}$.  For convenience, we have elected to work with the morphism $g$ throughout.}

\begin{theorem}\label{structure}
Let $w\in\Sigma_2^\omega$ be a $14/5$-free rich word.  For every $n\geq 1$, a suffix of $w$ has the form $f(h^n(w_n))$ or $f(g(h^n(w_n)))$ for some word $w_n\in\Sigma_3^\omega$.
\end{theorem}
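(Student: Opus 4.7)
The plan is to proceed by induction on $n$, with the bulk of the work concentrated in the base case $n=1$.

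For the base case, I would first observe that $14/5$-freeness implies cube-freeness, so $w$ contains no factor $\tt{111}$. Consequently the runs of $\tt{1}$s in $w$ have length at most $2$, and every sufficiently late position in $w$ admits a unique parse via the blocks $\{\tt{0},\tt{01},\tt{011}\}$. This gives a suffix of $w$ of the form $f(u)$ for some $u\in\Sigma_3^\omega$. Next, I would study $u$: since $f$ is a class~$P$ morphism, the connection between class~$P$ morphisms and rich words~\cite{BPS11} transfers the richness of $w$ to $u$ (possibly after a further suffix), and the $14/5$-freeness of $w$ translates, via the injectivity of $f$, into an explicit list of forbidden factors for $u$. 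The heart of the base case is then a combinatorial lemma: any infinite ternary word avoiding this list of forbidden factors has a suffix in $h(\Sigma_3^\omega)\cup g(h(\Sigma_3^\omega))$. The proof would be a case analysis on short factors of $u$: the forbidden patterns restrict the local structure enough to force $u$ to parse either into the $h$-blocks $\{\tt{01},\tt{02},\tt{022}\}$ or into the $(g\circ h)$-blocks.

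For the inductive step, assume a suffix of $w$ has the form $f(h^n(v))$ or $f(g(h^n(v)))$ for some $v\in\Sigma_3^\omega$. The goal is to show that $v$ has a suffix of the form $h(v')$, so that $w$ has a suffix $f(h^{n+1}(v'))$ or $f(g(h^{n+1}(v')))$. For this, I would verify that $v$ satisfies the same forbidden-factor conditions as in the base case; since $h$ and $g$ are injective class~$P$ morphisms, these conditions are inherited from the analogous conditions on the level-$(n{-}1)$ ternary word. The parsing argument from the base case can then be reapplied. A subtlety is that the inductive step must produce an $h$-decomposition (not a $(g\circ h)$-decomposition) of $v$; this reflects the fact that the $g$ option is only available at the outermost level, and will have to be argued separately by appealing to the structure of $h^n(v)$.

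The main obstacle is the combinatorial lemma in the base case: converting the rather abstract conditions of richness and $14/5$-freeness into a workable finite list of forbidden factors for the decoded ternary word and verifying that these alone force the parse. Richness is a global property of all prefixes and does not pass nicely through arbitrary morphisms, so identifying the right local conditions and showing that they suffice requires delicate casework. A secondary challenge is showing, in the inductive step, that only the $h$-option (and not $g\circ h$) is available at deeper levels, so that the form stated in the theorem can be maintained across all $n$.
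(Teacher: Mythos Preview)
Your proposal is correct and follows essentially the same approach as the paper: induction on $n$, with the base case obtaining the $f$-decomposition from cube-freeness, then a forbidden-factor case analysis on the ternary preimage to force an $h$- or $g\circ h$-parse, and the inductive step producing only an $h$-parse at deeper levels. The paper's concrete mechanism for the base-case bifurcation is whether the ternary preimage contains the factor $\tt{0110}$, and the inductive-step forbidden set $F=\{\tt{1221},\tt{00},\tt{10101},\tt{212},\tt{11}\}$ is established directly from goodness of $f(h^n(u))$ or $f(g(h^n(u)))$ rather than inherited level-by-level as you suggest.
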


\noindent
We then demonstrate that, like $f(h^\omega(\tt{0}))$, the word $f(g(h^\omega(\tt{0})))$ has  critical exponent $2+\sqrt{2}/2$.  This gives the following.

\begin{theorem}\label{threshold}
The repetition threshold for binary rich words is $2+\sqrt{2}/2$.
\end{theorem}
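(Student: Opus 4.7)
The upper bound $\RRT(2) \le 2+\sqrt{2}/2$ is already witnessed by Baranwal and Shallit's construction $f(h^\omega(\tt{0}))$, so only the matching lower bound remains. My plan is to argue the lower bound by contradiction. Suppose $w \in \Sigma_2^\omega$ is an infinite binary rich word whose critical exponent is strictly less than $2+\sqrt{2}/2$. Since $2+\sqrt{2}/2 < 14/5$, the word $w$ is in particular $14/5$-free, so Theorem~\ref{structure} applies: for every $n \ge 1$, some suffix $s_n$ of $w$ has the form $f(h^n(w_n))$ or $f(g(h^n(w_n)))$ for some $w_n \in \Sigma_3^\omega$. The aim is to locate, inside these suffixes, factors whose exponents approach $2+\sqrt{2}/2$ from below, contradicting the assumption.

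The first substantive step, already signaled in the introduction, is to verify that the companion morphic word $f(g(h^\omega(\tt{0})))$ also has critical exponent $2+\sqrt{2}/2$. I would carry this out following the template of Baranwal and Shallit's analysis of $f(h^\omega(\tt{0}))$: exhibit a concrete sequence of short factors $x_n$ of $h^\omega(\tt{0})$ whose images $f(h^n(x_n))$ and $f(g(h^n(x_n)))$ are $e_n$-powers with $e_n \to 2+\sqrt{2}/2$ (the rate governed by the dominant eigenvalue $1+\sqrt{2}$ of the incidence matrix of $h$), and complement this with a matching upper bound obtained by a period-pullback (desubstitution) argument through the three morphisms $f$, $g$, and $h$, making essential use of the fact that each of them is injective.

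To finish, it suffices to show that for every $n$, the suffix $s_n$ contains one of the high-exponent repetitions produced in the intermediate step. Since the first few letters of $w_n$ determine an initial block $f(h^n(v))$ (respectively $f(g(h^n(v)))$) of $s_n$ for some short $v \in \Sigma_3^*$, a finite case analysis on $v$ (covering the possible initial patterns) should locate the relevant repetition inside this initial block, independently of the continuation of $w_n$. Iterating this over all $n$ places in $w$ factors with exponent tending to $2+\sqrt{2}/2$, contradicting our assumption. The main obstacle is the intermediate step: pinning down the precise critical exponent $2+\sqrt{2}/2$ for $f(g(h^\omega(\tt{0})))$, where the matching upper bound requires a careful combinatorial analysis of periods pulled back through the composition of $f$, $g$, and $h$, rather than any single easy morphism.
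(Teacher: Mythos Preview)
Your overall strategy---upper bound from Baranwal--Shallit, lower bound by applying Theorem~\ref{structure} to a hypothetical $14/5$-free rich word---matches the paper's. Two points of divergence are worth noting.

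First, for the critical exponent of $f(g(h^\omega(\tt{0})))$ the paper does \emph{not} use desubstitution. Instead it shows that both $f(h^\omega(\tt{0}))$ and $f(g(h^\omega(\tt{0})))$ are \emph{complementary symmetric Rote words}: applying the first-difference map $\Delta$ to each yields a Sturmian word (namely $\xi^\omega(\tt{0})$ and $\eta(\xi^\omega(\tt{0}))$ for explicit Sturmian morphisms $\xi,\eta$). A repetition of period $\ell$ in $u$ forces a repetition of period $\ell$ in $\Delta(u)$ whose period word contains an even number of $\tt{1}$'s; the known classification of periods in Sturmian words (conjugates of standard or semi-standard words, with lengths the continued-fraction denominators of the slope $[0;4,\overline{2}]$) then reduces the computation to the explicit limit $2+(q_{k-1}-1)/(q_{k-2}+q_{k-1})\to 2+\sqrt{2}/2$. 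This is cleaner than a bespoke period-pullback through $f,g,h$, and it explains the appearance of $\sqrt{2}$ directly via the periodic continued fraction. Your desubstitution plan is plausible but, as you yourself flag, is the hard part; the Sturmian route outsources it to existing theory.

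Second, your final step is more elaborate than necessary, and the unspecified ``finite case analysis on $v$'' is where your outline is weakest. The paper's argument is one line: by Lemma~\ref{basic} each $w_n$ contains the letter~\tt{0}, so $w$ contains the factor $f(h^n(\tt{0}))$ (respectively $f(g(h^n(\tt{0})))$) for every $n$, i.e., arbitrarily long prefixes of $f(h^\omega(\tt{0}))$ (respectively $f(g(h^\omega(\tt{0})))$). Hence every factor of one of these two fixed words is a factor of $w$, and the critical exponent of $w$ is at least $2+\sqrt{2}/2$. Note too that for the threshold only the \emph{lower} bound on the critical exponents of the two morphic words is needed; the period-pullback upper bound you propose is superfluous for Theorem~\ref{threshold} (the paper proves it anyway because the Sturmian method delivers both directions at once).
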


Our structure theorem is somewhat reminiscent of the well-known structure theorem for overlap-free binary words due to Restivo and Salemi~\cite{RS83,RS85}, and its extension to $7/3$-free binary words by Karhum\"aki and Shallit~\cite{KS04}.  However, we deal only with infinite words.

\section{A structure theorem}

In this section, we prove Theorem~\ref{structure}.  Throughout, we say
that a word $w\in\Sigma_2^\omega$ is \emph{good} if it is both rich
and $14/5$-free.  In particular, a good word is cube-free.

We begin by proving several properties of the morphisms $f$, $g$, and $h$.  For every $\phi\in\{f,g,h\}$, one verifies by computer using a straightforward backtracking algorithm that the longest word $u\in\{\tt{1},\tt{2}\}^*$ such that $\phi(u)$ is cube-free has length $6$.  This gives the following.

\begin{observation}\label{start}
Let $\phi\in\{f,g,h\}$ and $u\in\Sigma_3^\omega$.  If $\phi(u)$ is cube-free, then $u$ contains a \tt{0}.
\end{observation}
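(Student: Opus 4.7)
The plan is to argue by contrapositive: assume $u\in\Sigma_3^\omega$ contains no occurrence of \tt{0}, and conclude that $\phi(u)$ is not cube-free. Under that assumption $u\in\{\tt{1},\tt{2}\}^\omega$, so in particular the prefix $v$ of $u$ of length $7$ lies in $\{\tt{1},\tt{2}\}^*$. By the computational fact stated immediately before the observation, the longest word in $\{\tt{1},\tt{2}\}^*$ whose image under $\phi$ is cube-free has length $6$; since $|v|=7$, the word $\phi(v)$ must contain a cube. As $\phi(v)$ is a prefix of $\phi(u)$, the infinite word $\phi(u)$ is therefore not cube-free, which is the desired contrapositive.

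The only substantive ingredient is the cited bound of $6$. To justify it I would run, for each $\phi\in\{f,g,h\}$, a depth-first backtracking procedure over $\{\tt{1},\tt{2}\}^*$: extend the current candidate $w$ one letter $a\in\{\tt{1},\tt{2}\}$ at a time, maintain the image $\phi(w)$, and after each extension check whether any cube has appeared in $\phi(w)$; prune whenever the image acquires a cube. A newly created cube must lie within a suffix of bounded length (at most $3\max_{a\in\Sigma_3}|\phi(a)|=18$, attained for $\phi=g$), so each node of the search is handled in constant time. Running the search for each of the three morphisms exhausts the tree at depth $6$, and a single surviving word of length $6$ recorded for each $\phi$ confirms that the bound is tight.

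The main obstacle is essentially nil: the claim is a bounded finite check. The only point that requires a bit of care is that cubes in $\phi(w)$ can straddle the boundaries between consecutive $\phi$-blocks, so the cube-freeness test must be performed on the whole image rather than letter-by-letter on preimages. Once the backtracking is set up correctly, the observation follows immediately from the prefix-of-length-$7$ argument above.
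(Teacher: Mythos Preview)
Your argument is correct and matches the paper's own justification: a backtracking search over $\{\tt{1},\tt{2}\}^*$ verifies that the longest preimage with cube-free $\phi$-image has length~$6$, and your length-$7$ prefix contrapositive is exactly how the observation follows. One small quibble: the assertion that a newly created cube must lie in a suffix of length at most $3\max_a|\phi(a)|$ is not correct in general (a cube ending in the newly appended block can reach arbitrarily far back into the earlier image), but this affects only your running-time remark, not the validity of the finite search or of the proof.
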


We now show that the morphisms $f$, $g$, and $h$ preserve non-richness
of $\omega$-words.  We require two short lemmas.  The first can be
derived from \cite[Lemma~5.2]{BPS11}, but we give a proof here for completeness.

\begin{lemma}\label{pal suf}
Let $\phi\in\{f,g,h\}$ and let $u, v\in\Sigma_3^*$. 
Suppose $\phi(u)\tt{0}$ is a palindromic suffix of $\phi(v)\tt{0}$. Then $u$ is a palindromic suffix of $v$.
\end{lemma}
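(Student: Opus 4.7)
The plan is to exploit the fact that each of the morphisms $f$, $g$, $h$ has a very rigid structure: by direct inspection of the definitions, for every letter $a\in\Sigma_3$ we can write $\phi(a)=\tt{0}q_a$ where $q_a\in\{\tt{1},\tt{2}\}^*$ is a palindrome, and the three palindromes $q_{\tt{0}},q_{\tt{1}},q_{\tt{2}}$ are pairwise distinct (this last just records injectivity of $\phi$ on letters). In particular, the letter $\tt{0}$ occurs in $\phi(a)$ exactly once, as its initial letter. This is the class~$P$ property alluded to in the paper, and it is what makes the statement true.

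Consequently, for any $w=w_1\cdots w_k\in\Sigma_3^*$,
\[
\phi(w)\tt{0}=\tt{0}\,q_{w_1}\,\tt{0}\,q_{w_2}\,\tt{0}\,\cdots\,\tt{0}\,q_{w_k}\,\tt{0},
\]
and the positions of the letter $\tt{0}$ in this word are precisely the $k+1$ boundary positions of this decomposition. Writing $u=u_1\cdots u_m$ and $v=v_1\cdots v_n$ (the case $u=\varepsilon$ is immediate), I would argue the suffix part as follows. Since $\phi(u)\tt{0}$ is a suffix of $\phi(v)\tt{0}$ and its first letter is $\tt{0}$, that leading $\tt{0}$ must align with some occurrence of $\tt{0}$ in $\phi(v)\tt{0}$; by the block structure this must be the start of some $\phi(v_i)$, so $\phi(u)=\phi(v_iv_{i+1}\cdots v_n)$, and injectivity of $\phi$ yields $u=v_i\cdots v_n$. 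Thus $u$ is a suffix of $v$.

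For palindromicity, reversing the same decomposition gives
\[
\widetilde{\phi(u)\tt{0}}=\tt{0}\,\widetilde{q_{u_m}}\,\tt{0}\,\widetilde{q_{u_{m-1}}}\,\tt{0}\,\cdots\,\tt{0}\,\widetilde{q_{u_1}}\,\tt{0},
\]
which, since each $q_a$ is a palindrome, equals $\tt{0}\,q_{u_m}\,\tt{0}\,q_{u_{m-1}}\,\tt{0}\,\cdots\,\tt{0}\,q_{u_1}\,\tt{0}$. Matching this with $\phi(u)\tt{0}$ block-by-block between consecutive $\tt{0}$'s forces $q_{u_i}=q_{u_{m+1-i}}$ for each $i$, and distinctness of the $q_a$'s then yields $u_i=u_{m+1-i}$, so $u$ is a palindrome.

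There is no real obstacle once the class~$P$ factorization $\phi(a)=\tt{0}q_a$ (with $q_a$ a palindrome in $\{\tt{1},\tt{2}\}^*$) has been verified for each of $f$, $g$, $h$; both conclusions then fall out by reading off the $\tt{0}$-delimited blocks and invoking injectivity. The only small bookkeeping point is the degenerate case $u=\varepsilon$, which is trivial.
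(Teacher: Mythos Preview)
Your argument is correct and is essentially the same as the paper's: both use the class~$P$ structure $\phi(a)=\tt{0}q_a$ with $q_a\in\{\tt{1},\tt{2}\}^*$ a palindrome, together with injectivity. The paper packages the palindrome step into the single identity $\tt{0}(\phi(u))^R=\phi(u^R)\tt{0}$ and then applies injectivity once to get $u=u^R$, whereas you unfold the same identity block by block; the suffix step is identical in content, with your version spelling out the $\tt{0}$-boundary alignment that the paper leaves implicit.
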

\begin{proof}
Since $\phi(u)\tt{0}$ is a suffix of $\phi(v)\tt{0}$ and $\phi$ is injective, we have that $u$ is a suffix of $v$. For any $u\in\Sigma_3^*$, we have $\tt{0}(\phi(u))^R=\phi(u^R)\tt{0}$. Since $\phi(u)\tt{0}$ is a palindrome,
$(\phi(u)\tt{0})^R=\tt{0}(\phi(u))^R=\phi(u^R)\tt{0}$. Since $\phi$ is injective, we have $u=u^R$. 
Thus $u$ is a palindromic suffix of $v$.
\end{proof}

In order to prove the next lemma, we use the fact that a word $w$ is rich if and only if every nonempty prefix $p$ of $w$ has a nonempty palindromic suffix that appears only once in $p$~\cite{GJWZ09}.

\begin{lemma}\label{non-rich} Let $\phi\in\{f,g,h\}$. Suppose that $w\in\Sigma_3^*$ is non-rich. Then $\phi(w)\tt{0}$ is non-rich.
\end{lemma}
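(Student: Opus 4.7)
The plan is to apply the characterization recalled just before the lemma: $w$ is rich if and only if every nonempty prefix of $w$ has a unioccurrent palindromic suffix, equivalently (since the palindromic suffixes of any word form a chain under the suffix order) if and only if the longest palindromic suffix of every nonempty prefix is unioccurrent. Since $w$ is non-rich, fix a nonempty prefix $p$ of $w$ whose longest palindromic suffix $q$ occurs at least twice in $p$. The goal is to exhibit a prefix of $\phi(w)\tt{0}$ whose longest palindromic suffix is similarly not unioccurrent.

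I would take $P:=\phi(p)\tt{0}$ as the candidate prefix. Since each of $f$, $g$, $h$ maps every letter to a word beginning with $\tt{0}$, the symbol of $\phi(w)\tt{0}$ immediately following $\phi(p)$ is a $\tt{0}$, so $P$ is indeed a prefix of $\phi(w)\tt{0}$. I then claim that the longest palindromic suffix of $P$ is exactly $\phi(q)\tt{0}$. For the lower bound, the identity $\tt{0}\phi(u)^R=\phi(u^R)\tt{0}$ used in the proof of Lemma~\ref{pal suf} shows that $\phi(q)\tt{0}$ is itself a palindromic suffix of $P$. For the upper bound, the key structural observation is that, for each $\phi\in\{f,g,h\}$, every image $\phi(a)$ contains exactly one occurrence of $\tt{0}$, located at its first position; hence the $\tt{0}$'s inside $P$ occur precisely at the starts of the blocks $\phi(p_i)$ together with the appended trailing $\tt{0}$. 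Any palindromic suffix $Q$ of $P$ both begins and ends with $\tt{0}$, so it must begin at the start of some block $\phi(p_k)$ and end at the trailing $\tt{0}$, forcing $Q=\phi(u)\tt{0}$ for some suffix $u$ of $p$. Lemma~\ref{pal suf} then says $u$ is a palindromic suffix of $p$, hence a suffix of $q$; taking $Q$ of maximal length and comparing with $\phi(q)\tt{0}$ forces $u=q$.

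To finish, it remains to exhibit a second occurrence of $\phi(q)\tt{0}$ inside $P$. Since $q$ occurs in $p$ at some position other than as the suffix, write $p=p_1 q p_2$ with $p_2$ nonempty. Because $\phi(p_2)$ begins with $\tt{0}$, the copy of $\phi(q)$ at position $|\phi(p_1)|+1$ of $P$ is immediately followed by a $\tt{0}$, producing an occurrence of $\phi(q)\tt{0}$ in $P$ distinct from the terminal one. Thus $P$ is a prefix of $\phi(w)\tt{0}$ whose longest palindromic suffix is not unioccurrent, and the characterization implies that $\phi(w)\tt{0}$ is non-rich.

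The main obstacle I anticipate is the structural step forcing every palindromic suffix of $P$ to have the form $\phi(u)\tt{0}$; this is what lets Lemma~\ref{pal suf} translate a palindromic suffix of $P$ back into a palindromic suffix of $p$, and it depends crucially on the \emph{one $\tt{0}$ per image, only at the start} property shared by $f$, $g$, and $h$. The remainder is bookkeeping and a direct application of Lemma~\ref{pal suf}.
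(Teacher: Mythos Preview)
Your proof is correct and follows essentially the same route as the paper: pick the bad prefix $p$, pass to $P=\phi(p)\tt{0}$, use the ``one \tt{0} per image, at the start'' property to force every palindromic suffix of $P$ into the shape $\phi(u)\tt{0}$, invoke Lemma~\ref{pal suf}, and then read off a second occurrence from a second occurrence of the preimage. The only cosmetic difference is that the paper works directly with the ``every palindromic suffix occurs twice'' formulation rather than isolating the longest one, which lets it skip the step identifying $\phi(q)\tt{0}$ as maximal; your version is slightly more explicit about the structural step the paper states without justification.
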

\begin{proof}
Let $w'$ be a prefix of $w$ such that every palindromic suffix of $w'$ occurs at least twice in $w'$.  We claim that $\phi(w')\tt{0}$ is a prefix of $\phi(w)\tt{0}$ such that every palindromic suffix of $\phi(w')\tt{0}$ occurs at least twice in $\phi(w')\tt{0}$.  Any palindromic suffix of $\phi(w')\tt{0}$ has the form $\phi(u)\tt{0}$ for some $u$.  Then by Lemma~\ref{pal suf}, we know that $u$ is a palindromic suffix of $w'$.  However, by hypothesis, $w'$ contains two occurrences of $u$.  Consequently, $\phi(w')\tt{0}$ contains two occurrences of the palindrome $\phi(u)\tt{0}$.  We conclude that $\phi(w)\tt{0}$ is non-rich, as required.
\end{proof}

The fact that the morphisms $f$, $g$, and $h$ preserve non-richness of $\omega$-words now follows as an easy corollary.

\begin{corollary}\label{non-rich-cor}
Let $\phi\in\{f,g,h\}$ and $u\in\Sigma_3^\omega$.  If $\phi(u)$ is rich, then $u$ is rich.
\end{corollary}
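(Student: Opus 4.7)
The plan is to prove the contrapositive: assuming $u$ is not rich, I will show $\phi(u)$ is not rich. Since an infinite word is rich precisely when all its finite factors are rich, and since every finite factor is contained in some finite prefix, richness of an infinite word is equivalent to richness of every finite prefix (using the standard fact that factors of rich words are rich). Consequently, from the assumption that $u$ is not rich, I can extract a finite prefix $w$ of $u$ that is not rich.

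Now I would apply Lemma~\ref{non-rich} to this prefix $w$ to conclude that $\phi(w)\tt{0}$ is not rich. To lift this conclusion back to $\phi(u)$, I would observe from the explicit definitions of $f$, $g$, and $h$ that each of these morphisms maps every letter of $\Sigma_3$ to a word beginning with the letter $\tt{0}$. In particular, the letter of $\phi(u)$ immediately following the prefix $\phi(w)$ is $\tt{0}$, so $\phi(w)\tt{0}$ is itself a prefix of $\phi(u)$, hence a factor.

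Since $\phi(u)$ contains a non-rich factor, $\phi(u)$ is not rich, completing the contrapositive. I do not expect any genuine obstacle here: the substantive content is packaged inside Lemma~\ref{non-rich}, and the only additional ingredients are the trivial observation that $f$, $g$, and $h$ each send every letter to a word starting with $\tt{0}$, together with the standard characterization of rich infinite words in terms of their prefixes.
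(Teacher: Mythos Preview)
Your proposal is correct and matches the paper's intended approach: the paper states only that the corollary follows easily from Lemma~\ref{non-rich}, and your argument is exactly the natural way to fill in that gap, using the contrapositive together with the observation that $\phi(w)\tt{0}$ is a prefix of $\phi(u)$ since each image $\phi(a)$ begins with $\tt{0}$.
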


By straightforward induction arguments using Observation~\ref{start} and Corollary~\ref{non-rich-cor}, we obtain the following.

\begin{lemma}\label{basic}
Let $\phi$ be a morphism of the form $f\circ h^n$ or $f\circ g\circ h^n$ for some $n\geq 0$.  If $\phi(u)$ is good for some $u\in\Sigma_3^\omega$, then the word $u$ is cube-free, rich, and contains a \tt{0}.
\end{lemma}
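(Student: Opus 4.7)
The plan is to establish a single one-step descent lemma for each of the morphisms $f$, $g$, $h$, and then apply it repeatedly. Specifically, I would first prove: if $\psi\in\{f,g,h\}$ and $\psi(u)$ is cube-free and rich for some $u\in\Sigma_3^\omega$, then $u$ is cube-free, rich, and contains a \tt{0}. Richness of $u$ is exactly Corollary~\ref{non-rich-cor}, and the existence of a \tt{0} in $u$ is exactly Observation~\ref{start} (which applies because $\psi(u)$ is cube-free). Cube-freeness of $u$ follows from the fact that each of $f$, $g$, $h$ is non-erasing: any cube $x^3$ in $u$ with $x$ nonempty would yield a cube $\psi(x)^3=\psi(x^3)$ in $\psi(u)$, contradicting cube-freeness of $\psi(u)$.

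With this descent step in hand, I would prove the lemma by induction on $n\geq 0$, noting that any good word is, by definition, $14/5$-free and rich, and hence in particular cube-free and rich. For the base case $n=0$, I handle $\phi=f$ and $\phi=f\circ g$ separately: in the first, a single application of the descent step suffices; in the second, I apply the descent step once to strip off $f$ (giving that $g(u)$ is cube-free and rich), and then once more to strip off $g$.

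For the induction step, suppose the claim holds at level $n$. If $\phi$ has the form $f\circ h^{n+1}$ or $f\circ g\circ h^{n+1}$, I write $\phi=\phi'\circ h$, where $\phi'$ is the corresponding morphism at level $n$, and observe that $\phi(u)=\phi'(h(u))$ is good. The induction hypothesis applied to $\phi'$ with input $h(u)$ then shows that $h(u)$ is cube-free, rich, and contains a \tt{0}. A final application of the descent step to $h$ yields the desired conclusion for $u$.

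I do not expect a substantive obstacle: the lemma is essentially bookkeeping layered on top of Observation~\ref{start} and Corollary~\ref{non-rich-cor}, together with the elementary cube-lifting argument for non-erasing morphisms. The only care needed is to peel off one morphism at a time, so that each descent invocation is of a single $\psi\in\{f,g,h\}$.
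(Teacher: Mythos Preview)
Your proposal is correct and follows essentially the same approach as the paper, which simply states that the lemma follows ``by straightforward induction arguments using Observation~\ref{start} and Corollary~\ref{non-rich-cor}.'' You have merely made explicit the trivial cube-lifting step for non-erasing morphisms and the layer-by-layer induction that the paper leaves to the reader.
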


\noindent
We use Lemma~\ref{basic} frequently throughout this section, sometimes without reference.

If $w$ is good, then $w$ avoids the cube $\tt{111}$, so the following observation is immediate.

\begin{observation}\label{obs}
If $w\in\Sigma_2^\omega$ is good, then a suffix of $w$ has the form $f(u)$ for some word $u\in\Sigma_3^\omega$.
\end{observation}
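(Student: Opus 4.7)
The plan is to exploit the fact that a good word is cube-free, so $w$ avoids the factor $\tt{111}$. First I would observe that $w$ cannot have a suffix of the form $\tt{1}^\omega$ (which would contain $\tt{111}$), and in fact must contain infinitely many occurrences of $\tt{0}$ (else some suffix is $\tt{1}^\omega$). In particular, $w$ contains at least one $\tt{0}$; let $k$ be the position of the first $\tt{0}$, and let $w'$ be the suffix of $w$ beginning at position $k$, so $w'$ begins with $\tt{0}$ and contains infinitely many $\tt{0}$'s.

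Next I would give a block decomposition of $w'$. Because every position of $w'$ containing a $\tt{0}$ begins a maximal factor of the form $\tt{0}\tt{1}^j$ ending just before the next $\tt{0}$, and because $w'$ (being a factor of $w$) avoids $\tt{111}$, each such block has $j\in\{0,1,2\}$. Thus each block is one of $\tt{0},\tt{01},\tt{011}$, which are exactly the images $f(\tt{0}),f(\tt{1}),f(\tt{2})$. Since there are infinitely many $\tt{0}$'s in $w'$, this decomposition is into infinitely many finite blocks, so concatenating the corresponding letters of $\Sigma_3$ gives an infinite word $u\in\Sigma_3^\omega$ with $f(u)=w'$.

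The argument is mostly bookkeeping; the only subtle point is ensuring the decomposition exhausts all of $w'$, which is why I would explicitly rule out the case where $w$ has only finitely many $\tt{0}$'s using cube-freeness. Since $f$ is injective, the word $u$ is uniquely determined, though uniqueness is not required by the statement. No deeper property of rich words is needed — avoidance of the cube $\tt{111}$ is sufficient, which is why the observation is stated without any appeal to richness beyond what goodness already provides.
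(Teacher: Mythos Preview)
Your proposal is correct and follows exactly the route the paper takes: the paper simply remarks that avoiding the cube $\tt{111}$ makes the observation immediate, and you have written out the block decomposition into $\tt{0},\tt{01},\tt{011}$ that this entails. Your extra care in ruling out a suffix $\tt{1}^\omega$ (equivalently, ensuring infinitely many $\tt{0}$'s) is the only detail the paper leaves implicit, and it is handled correctly.
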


So we may now restrict our attention to good words of the form $f(u)$,
where $u\in\Sigma_3^\omega$.  By Lemma~\ref{basic}, if
$u\in\Sigma_3^\omega$ is a word such that $f(u)$ is good, then every
factor of $u$ is rich, i.e., no non-rich word is a factor of $u$.
There are a variety of other short factors that cannot appear in such
a word $u$.  One checks by backtracking that for each word $v$ in
Table~\ref{table}, there is a longest right-extension
$vs\in\Sigma_3^*$ of $v$ such that $f(vs)$ is not
good. Table~\ref{table} indicates in each case the length of such a
longest extension $vs$. (The notation $*$ indicates that $f(v)$
already fails to be good.)  Hence, none of the factors in
Table~\ref{table} can appear in $u\in\Sigma_3^\omega$ if $f(u)$ is
good.  We use this fact frequently throughout this section.  We also
remark that the choice of the constant $14/5$ in the definition of
``good'' becomes relevant at this backtracking step.  If we
replace $14/5$ with $3$ in the definition of ``good'', then for
certain $v$ the backtracking search runs for a very long time without
finding a longest right-extension $vs$ such that $f(vs)$ is not good.

\begin{table}
\begin{center}
 $\begin{array}{|l|l|l|}\hline
\mbox{Table row}&v&|vs|\\\hline
1&\tt{00}&2\\
2&\tt{0121012}&49\\
3&\tt{021}&22\\
4&\tt{0221}&19\\
5&\tt{11010}&24\\
6&\tt{11011}&29\\
7&\tt{1102}&30\\
8&\tt{112}&*\\
9&\tt{120}&22\\
10&\tt{122}&17\\
11&\tt{21010}&6\\
12&\tt{2101210}&48\\
13&\tt{211}&3\\\hline
\end{array}$
\end{center}
\caption{Forbidden factors in every $\omega$-word $u$ such that $f(u)$ is good.}
\label{table}
\end{table}

We will prove that if $f(u)$ is good for some $\omega$-word $u$, then $u$ either has a suffix of the form $g(W)$, or a suffix of the form $h(W)$.  It turns out that if $u$ contains the factor \tt{0110}, then we are forced into the former structure.  Otherwise, if $h$ does not contain \tt{0110}, then we are forced into the latter structure.  We handle the case that $u$ contains the factor \tt{0110} first.  In fact, we show that in this case, a suffix of $u$ must have the form $h(g(U))$.

\begin{lemma}~\label{0110} Suppose $f(u)$ is good, where $u\in\Sigma_3^\omega$, and $u$ contains the factor \tt{0110}. Then \begin{enumerate}
\item The word $u$ has a suffix of the form $g(W)$ for some word $W\in\Sigma_3^\omega$.
\item A suffix of $W$ has the form $h(U)$ for some word $U\in\Sigma_3^\omega$.
\end{enumerate}
\end{lemma}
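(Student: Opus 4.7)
The plan is to locate an occurrence of \tt{0110} in $u$, say $u_k u_{k+1} u_{k+2} u_{k+3} = \tt{0110}$, and to show that the suffix $v = u_k u_{k+1}\cdots$ parses uniquely as a concatenation of blocks from $\{g(\tt{0}), g(\tt{1}), g(\tt{2})\} = \{\tt{011}, \tt{0121}, \tt{012121}\}$; the word $W$ is then read off from this parse, and the first block is forced to be $g(\tt{0})$ since $v_0 v_1 v_2 v_3 = \tt{0110}$. For part~(2), I would transport the forbidden-factor information on $u$ back through $g$ to obtain constraints on $W$ that force the $h$-structure.

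For the inductive step in part~(1), I assume a prefix of $v$ has been parsed as a concatenation of $g$-blocks and identify the next block. After a completed block, the next letter is forced to be \tt{0}: the alternatives \tt{1} and \tt{2} give $\tt{111}$ (a cube) or row~8 ($\tt{112}$) when the previous block is $g(\tt{0})$, and row~13 ($\tt{211}$) or a short backtracking using rows~9 and~10 when the previous block is $g(\tt{1})$ or $g(\tt{2})$. From \tt{0}, the next letter is forced to \tt{1}: \tt{0} is killed by row~1 ($\tt{00}$) and \tt{2} by row~7 ($\tt{1102}$) after $g(\tt{0})$ or by rows~3 and~4 ($\tt{021}, \tt{0221}$) after a $g(\tt{1})/g(\tt{2})$ block. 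One letter of lookahead then distinguishes \tt{011} from \tt{012}, with row~6 ($\tt{11011}$) forbidding two consecutive $g(\tt{0})$ blocks and rows~5, 11 ($\tt{11010}, \tt{21010}$) preventing an isolated \tt{010}. From \tt{012}, rows~9 and~10 force the next letter to be \tt{1}, giving \tt{0121}; another lookahead distinguishes $g(\tt{1})$ (next letter \tt{0}) from a continuation to \tt{01212} (\tt{1} killed by row~13), which by rows~9, 10 extends to $g(\tt{2}) = \tt{012121}$, and the step after which is forced to \tt{0} (the case \tt{2} iterates into \tt{0121212} and is killed a few steps later by rows~9 and~10 or by the derived forbidden factor $\tt{121212}$, whose $f$-image $(\tt{01011})^3$ is a cube and is ruled out by the $14/5$-free hypothesis). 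The $g$-parse is therefore well-defined despite the non-prefix-code identity $g(\tt{2}) = g(\tt{1})\cdot\tt{21}$; and the induction does not stall in $\{\tt{1},\tt{2}\}^\omega$ since every such infinite tail is eventually $(\tt{12})^\omega$ or $(\tt{21})^\omega$, again containing $\tt{121212}$.

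For part~(2), I translate forbidden factors back through $g$. One checks that $g(\tt{00})$ contains $\tt{11011}$ (row~6); $g(\tt{11})$ and $g(\tt{12})$ each contain $\tt{0121012}$ (row~2); each of $g(\tt{210})$, $g(\tt{211})$, $g(\tt{212})$ contains $\tt{2101210}$ (row~12) or $\tt{0121012}$, so $W$ avoids $\tt{21}$ (since $W$ is infinite and every two-letter right-extension of $\tt{21}$ in $W$ is forbidden); and $g(\tt{222}) = (\tt{012121})^3$ is a cube. Hence $W$ avoids $\tt{00}, \tt{11}, \tt{12}, \tt{21}, \tt{222}$, so in $W$ the letter \tt{1} is always followed by \tt{0}, the letter \tt{2} by \tt{0} or \tt{2}, and at most two \tt{2}s occur in a row. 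Since $W_0 = \tt{0}$ from part~(1) and the cube-freeness of $W$ (Lemma~\ref{basic}) implies $W$ has infinitely many \tt{0}s, $W$ decomposes into blocks from $\{\tt{01}, \tt{02}, \tt{022}\} = \{h(\tt{0}), h(\tt{1}), h(\tt{2})\}$, yielding $W = h(U)$ for some $U\in\Sigma_3^\omega$; in particular, a suffix of $W$ has the required form.

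The main obstacle I expect is the careful case analysis in part~(1), in particular tracking how the previous-block context (ending in $\tt{11}$ vs.\ $\tt{21}$) feeds into the forbidden-factor deductions for the next block and explicitly resolving the $g(\tt{1})/g(\tt{2})$ ambiguity by one letter of lookahead. A minor but essential point is the invocation of $14/5$-freeness (via \tt{121212}) rather than only cube-freeness, which is precisely what the paper flags as the reason for choosing the threshold $14/5$. Once part~(1) is established, part~(2) reduces to routine bookkeeping on $g$-images and the transitions they force in $W$.
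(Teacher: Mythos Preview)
Your approach is the same as the paper's: parse the suffix of $u$ starting at the occurrence of $\tt{0110}$ into $\tt{0}$-blocks, eliminate all candidates except $\{\tt{011},\tt{0121},\tt{012121}\}$ via cube-freeness, richness, and Table~\ref{table}, and then for part~(2) pull the forbidden factors back through $g$ to forbid $\tt{00},\tt{11},\tt{12},\tt{21}$ in $W$ and read off the $h$-parse from the resulting $\tt{0}$-blocks of $W$.

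There is one genuine gap in your part~(1) case analysis. When the previous block ends in $\tt{21}$, you claim the next block cannot begin $\tt{02}$ by invoking rows~3 and~4 ($\tt{021}$, $\tt{0221}$). But those rows only forbid certain \emph{continuations} of $\tt{02}$; the extensions $\tt{020}$ and $\tt{0220}$ contain neither $\tt{021}$ nor $\tt{0221}$, and neither is excluded by any row of Table~\ref{table} in the context $\tt{21}\cdots$. The fix the paper uses is immediate: $\tt{2102}$ is not rich (its only nonempty palindromic factors are the three single letters), so it cannot occur in $u$, and hence $\tt{02}$ cannot follow a $\tt{21}$-ending block at all. Two minor side remarks: you do not need the $f$-image computation to kill $\tt{0121212}$, since $\tt{121212}=(\tt{12})^3$ is already a cube in $u$ and Lemma~\ref{basic} applies; and in part~(2) the reason $W$ contains infinitely many $\tt{0}$'s is not bare cube-freeness but your own forbidden-factor list (after $\tt{1}$ or $\tt{22}$ the next letter is forced to be $\tt{0}$), or equivalently Lemma~\ref{basic} guarantees a $\tt{0}$ in $W$, after which one passes to a suffix as the paper does.
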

\begin{proof}
(1) Replacing $u$ by a suffix if necessary, write $u=u_1u_2u_3u_4\cdots$, where $u_1=\tt{011}$ and each $u_i$ starts with \tt{0} and contains no other \tt{0}. To show that $u=g(W)$ for some $W\in\Sigma_3^\omega$, it will suffice to show that every $u_i$ is one of \tt{011}, \tt{0121} or \tt{012121}.  The proof is by induction on $i$.  The base case is immediate since $u_1=\tt{011}$.

Now suppose for some $i\geq 1$ that $u_i\in\{\tt{011},\tt{0121},\tt{012121}\}$. Consider the tree in Figure~\ref{tree1}, which shows all candidates for $u_{i+1}\tt{0}$.  We explain why the word ending at every unboxed leaf of the tree cannot be a prefix of $u_{i+1}\tt{0}$, from which we conclude that $u_{i+1}\in\{\tt{011},\tt{0121},\tt{012121}\}$.  We use the following facts: 
\begin{itemize}
    \item By Lemma~\ref{basic}, the word $u$ is cube-free and rich.
    \item No word in Table~\ref{table} is a factor of $u$.
    \item The word $u_i$ must have suffix \tt{11} or \tt{21} by the induction hypothesis; so if neither $\tt{11}x$ nor $\tt{21}x$ appears in $u$, then $x$ cannot be a prefix of $u_{i+1}\tt{0}$.
\end{itemize}
We discuss each unboxed leaf of the tree in lexicographic order.
\begin{itemize}
    \item \tt{00}: The word \tt{00} is in Table~\ref{table}.
    \item \tt{010}: The words \tt{11010} and \tt{21010} are in Table~\ref{table}.
    \item \tt{0111}: The word \tt{111} is a cube.
    \item \tt{0112}: The word \tt{112} is in Table~\ref{table}.
    \item \tt{0120}: The word \tt{0120} is not rich.
    \item \tt{01211}: The word \tt{211} is in Table~\ref{table}.
    \item \tt{012120}: The word \tt{012120} is not rich.
    \item \tt{0121211}: The word \tt{211} is in Table~\ref{table}.
    \item \tt{0121212}: The word \tt{121212} is a cube.
    \item \tt{012122}: The word \tt{122} is in Table~\ref{table}.
    \item \tt{0122}: The word \tt{122} is in Table~\ref{table}.
    \item \tt{02}: The word \tt{1102} is in Table~\ref{table}, and the word \tt{2102} is not rich.
\end{itemize}

\begin{figure}
\centering
\begin{forest}
[
\tt{0},for tree={grow=0,l=2cm}
    [
    \tt{2}
    ]
    [
    \tt{1}
        [
        \tt{2}
            [
            \tt{2}
            ]
            [
            \tt{1}
                [
                \tt{2}
                    [
                    \tt{2}
                    ]
                    [
                    \tt{1}
                        [
                        \tt{2}
                        ]
                        [
                        \tt{1}
                        ]
                        [
                        \tt{0},draw,fill=green
                        ]
                    ]
                    [
                    \tt{0}
                    ]
                ]
                [
                \tt{1}
                ]
                [
                \tt{0},draw,fill=green
                ]
            ]
            [
            \tt{0}
            ]
        ]
        [
        \tt{1}
            [
            \tt{2}
            ]
            [
            \tt{1}
            ]
            [
            \tt{0},draw,fill=green
            ]
        ]
        [
        \tt{0}
        ]
    ]
    [
    \tt{0}
    ]
]
\end{forest}
    \caption{The tree showing all possible prefixes of $u_{i+1}\tt{0}$.}
    \label{tree1}
\end{figure}
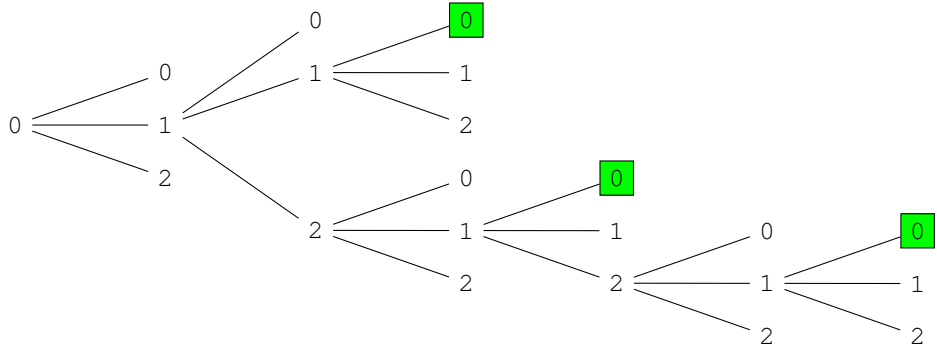

(2) To begin with, we show that \tt{00}, \tt{11}, \tt{12}, and \tt{21} are not factors of $W$. If $W$ contains \tt{00}, then $u$ contains $g(\tt{00})=\tt{011011}$, but this is impossible since \tt{11011} is in Table~\ref{table}.  If $W$ contains \tt{11} or \tt{12}, then $u$ contains $g(\tt{11})=\tt{01210121}$ or $g(\tt{12})=\tt{0121012121}$; but this is impossible since \tt{0121012} is in Table~\ref{table}. Finally, if $W$ contains \tt{21}, then $u$ contains $g(\tt{21})\tt{0}$=\tt{01212101210}, but this is impossible since \tt{2101210} is in Table~\ref{table}.

By Lemma~\ref{basic}, the word $W$ contains a \tt{0}. Replacing $W$ by a suffix if necessary, write $W=W_1W_2W_3W_4\cdots$, where each $W_i$ starts with \tt{0} and contains no other \tt{0}.  Let $i\geq 1$.  As above, we enumerate the possible prefixes of $W_i0$ in the tree of Figure~\ref{tree2}.  It is easy to verify that the word ending at every unboxed leaf of the tree ends in one of the factors \tt{00}, \tt{11}, \tt{12}, \tt{21}, or the cube \tt{222}, so we conclude that $W_i\in\{\tt{01},\tt{02},\tt{022}\}$ as desired.
\begin{figure}
\centering
\begin{forest}
[
\tt{0},for tree={grow=0,l=2cm}
    [
    \tt{2}
        [
        \tt{2}
            [
            \tt{2}
            ]
            [
            \tt{1}
            ]
            [
            \tt{0},draw,fill=green
            ]
        ]
        [
        \tt{1}
        ]
        [
        \tt{0},draw,fill=green
        ]
    ]
    [
    \tt{1}
        [
        \tt{2}
        ]
        [
        \tt{1}
        ]
        [
        \tt{0},draw,fill=green
        ]
    ]
    [
    \tt{0}
    ]
]
\end{forest}
    \caption{The tree showing all possible prefixes of $W_i\tt{0}$.}
    \label{tree2}
\end{figure}
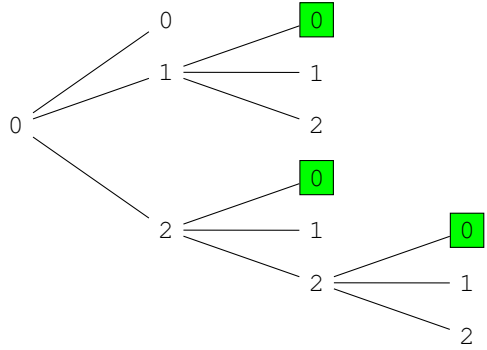
\end{proof}

We now show that there are several factors that do not appear in relevant preimages of good words.  Define $F=\{\tt{1221}, \tt{00}, \tt{10101}, \tt{212},\tt{11}\}$.

\begin{lemma}\label{forbidden factors}
Let $u\in\Sigma_3^\omega$. Suppose that for some positive integer $n$, one of $f(g(h^n(u)))$ and $f(h^n(u))$ is good. Then a suffix of $u$ does not contain any of the factors in the set $F$.
\end{lemma}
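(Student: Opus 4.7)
The plan is a case-by-case contradiction on $v\in F$. Fix $n\geq 1$ and suppose towards contradiction that some $v\in F$ appears in every suffix of $u$, so an internal occurrence of $v$ sits in $u$ arbitrarily far to the right; the goal is to contradict the hypothesis that at least one of $f(h^n(u))$ and $f(g(h^n(u)))$ is good, i.e.\ to rule out both. Two ingredients drive the argument. First, iterating Lemma~\ref{basic} with $\phi=f\circ h^{n-k}$ (respectively, $\phi=f\circ g\circ h^{n-k}$) applied to $h^k(u)$ shows that $h^k(u)$ is cube-free and rich for every $0\leq k\leq n$. Second, since $f(h^n(u))$ and $f(g(h^n(u)))$ are $f$-images of $\Sigma_3^\omega$-words, Table~\ref{table} forbids all thirteen listed factors in $h^n(u)$ (respectively, in $g(h^n(u))$).

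For each of the five factors I would pick an internal occurrence of $v$ together with a short context (a few letters to the right, and when helpful to the left) and expand the tree of possibilities as in the proof of Lemma~\ref{0110}. For each surviving leaf, apply $h$ (and if necessary $h^2$ or $g$) and check that the resulting image in $h^n(u)$ or $g(h^n(u))$ contains either a cube --- contradicting cube-freeness of the intermediate $h^k(u)$ --- or one of the Table factors. For example, for $v=\tt{11}$, cube-freeness rules out the extension $\tt{111}$, and on each of the remaining short extensions the $h$-image, together with one more letter of context, is expected to contain either a cube such as $\tt{020202}$ or a Table factor such as $\tt{0121012}$ or $\tt{2101210}$.

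The main obstacle I foresee is the case explosion for the longer patterns $\tt{1221}$ and $\tt{10101}$, where more context is needed on both sides before a cube or Table factor surfaces; the parallel treatment of the two morphism chains $f\circ h^n$ and $f\circ g\circ h^n$ essentially doubles the analysis, because $g$ introduces $\tt{1}$- and $\tt{2}$-patterns not directly produced by $h$ alone. The cleanest organization would be to show that after a bounded number of $h$-iterations (independent of $n$) every surviving branch is already eliminated, so the verification reduces to a finite backtracking check of the kind already used elsewhere in the section.
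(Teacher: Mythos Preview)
Your case-by-case strategy via cube-freeness of the intermediates $h^k(u)$ (from Lemma~\ref{basic}) is exactly the paper's approach, but two points need sharpening.

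First, your difficulty assessment is inverted. The words \tt{1221} and \tt{10101} are the \emph{easy} cases: $h(\tt{1221})=\tt{0202202202}$ already contains the cube $(\tt{202})^3$, and \tt{00}, \tt{10101}, \tt{11} are dispatched with a single letter of right context plus one or two applications of $h$ (or of $g$, for the $f\circ g\circ h^n$ branch); no tree expansion is needed. The delicate case is \tt{212}.

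Second, Table~\ref{table} constrains only the word fed directly to $f$, i.e.\ $h^n(u)$ or $g(h^n(u))$, and says nothing about an intermediate $h^k(u)$ with $k<n$. So your plan to look for Table factors after a bounded number of $h$-steps is not usable when $n$ is large. The paper's proof never invokes Table~\ref{table} here; cubes alone suffice.

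For \tt{212} the paper exploits the prefix chains $f(\tt{0})\prec f(\tt{1})\prec f(\tt{2})$, $g(\tt{1})\prec g(\tt{2})$, and $h(\tt{1})\prec h(\tt{2})$: a ``near-cube'' of the shape $v\tt{0}v\tt{0}v\tt{2}$ or $v\tt{1}v\tt{1}v\tt{2}$ becomes an actual cube under $f$ or $g$, while $h$ maps such a pattern to another near-cube of the second shape. Since $h(\tt{212})$ ends in $\tt{202022}$ (the first shape with $v=\tt{2}$), a short induction handles all $n\ge 1$ simultaneously. Your finite-backtracking plan can also be made to work---one checks that $h^3(\tt{212})$ already contains a cube, leaving only $n=1,2$ to verify directly---but the paper's structural argument is cleaner and explains \emph{why} the cube must eventually appear.
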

\begin{proof}
By Lemma~\ref{basic}, the word $u$ must be cube-free and rich, and we may assume, by taking a suffix if necessary, that $u$ begins in \tt{0}.

\smallskip
\noindent {\tt{1221}:}
Since $h(\tt{1221})$ contains a cube, \tt{1221} cannot be a factor of $u$. 

\smallskip
\noindent{\tt{00}:}
For any letter $x\in\{0,1,2\}$, all of $f(h(\tt{00}x))$, $g(h(\tt{00}x))$, and $h^2(\tt{00}x)$ contain a cube.  Suppose towards a contradiction that \tt{00} is a factor of $u$.  If $n=1$, then $f(h(u))$ and $f(g(h(u)))$ contain factors of the form $f(h(\tt{00}x))$ and $f(g(h(\tt{00}x)))$, respectively, giving a cube. Otherwise, if $n\ge 2$, then $f(h^n(u))$ and $f(g(h^n(u)))$ contain factors of the form $f(h^{n-2}(h^2(\tt{00}x)))$ and $f(g(h^{n-2}(h^2(\tt{00}x))))$, respectively, giving a cube.  Since $u$ is cube-free, this is impossible, and we conclude that \tt{00} is a not a factor of $u$.

\smallskip
\noindent{\tt{10101}:}
All of $f(h(\tt{10101}x))$, $g(h(\tt{10101}x))$, and $h^2(\tt{10101}x)$ contain cubes.  By an argument similar to the one used for \tt{00}, we see that the factor \tt{10101} cannot be a factor of $u$.

\smallskip
\noindent {\tt{212}:}
First note that $f(\tt{0})$ is a prefix of $f(\tt{1})$, which is a prefix of $f(\tt{2})$. It follows that if $v\in\Sigma_3^*$, then $f(v\tt{0}v\tt{0}v\tt{1})$ and $f(v\tt{1}v\tt{1}v\tt{2})$ contain cubes. Next, note that $g(v\tt{0}v\tt{0}v\tt{2})=(V\tt{1}V\tt{1}V\tt{2})\tt{121}$, where $V=g(v)\tt{01}$.  Since $g(\tt{1})$ is a prefix of $g(\tt{2})$, we see that $g(v\tt{1}v\tt{1}v\tt{2})$ contains a cube. Similarly, note that $h(v\tt{0}v\tt{0}v\tt{2})=(V\tt{1}V\tt{1}V\tt{2})\tt{2}$, where $V=h(v)\tt{0}$.  Further, since $h(\tt{1})$ is a prefix of $h(\tt{2})$, we see that $h(v\tt{1}v\tt{1}v\tt{2})$ contains a cube. Finally, note that $h(\tt{212})=\tt{02202022}$ ends in a factor of the form $v\tt{0}v\tt{0}v\tt{2}$, where $v=\tt{2}$. 

Suppose that \tt{212} is a factor of $u$. It follows by induction that  $h^n(u)$ contains either a cube, a factor of the form $v\tt{0}v\tt{0}v\tt{2}$ (in the case $n=1$), or a factor of the form $v\tt{1}v\tt{1}v\tt{2}$. It follows that $g(h^n(u))$ contains a factor of the form $V\tt{1}V\tt{1}V\tt{2}$, or a cube, so that $f(h^n(u))$ and $f(g(h^n(u)))$ both contain cubes. This is impossible.


\smallskip
\noindent {\tt{11}:} Suppose that $\tt{11}$ is a factor of $u$.  The words \tt{111}, $h(\tt{112})$, and $h(\tt{211})\tt{0}$ all contain a cube, hence $\tt{11}$ is preceded and followed by \tt{0}. Thus, \tt{0110} is a factor of $u$. However, all of $f(h(\tt{0110}))$, $g(h(\tt{0110}))$, and $h^2(\tt{0110})$ contain a cube.  By an argument similar to the one used for \tt{00}, we conclude that $\tt{11}$ is not a factor of $u$.
\end{proof}

We now prove that any cube-free rich word $u\in\Sigma_3^\omega$ that avoids the finite list of factors from Lemma~\ref{forbidden factors} must have a suffix of the form $h(W)$.  Together, Lemma~\ref{forbidden factors} and the following lemma will form the inductive step of our structure theorem.

\begin{lemma}\label{0-blocks} Suppose that $u\in\Sigma_3^\omega$ is cube-free and rich. If $u$ does not contain any of the factors in the set $F$,
then $u$ has a suffix of the form $h(W)$ for some word $W\in\Sigma_3^\omega$.
\end{lemma}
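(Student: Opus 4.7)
The plan is to decompose $u$ into $\tt{0}$-blocks and enumerate the possible block shapes, in the same spirit as the argument in Lemma~\ref{0110}(2). First, I would observe that $u$ must contain infinitely many $\tt{0}$s: cube-freeness together with the avoidance of $F$ already rules out every word of length at least $4$ over $\{\tt{1},\tt{2}\}$ (the three candidates $\tt{121}$, $\tt{122}$, $\tt{221}$ of length~$3$ admit no extension avoiding $\tt{11}$, $\tt{212}$, $\tt{1221}$, $\tt{222}$), so $u$ cannot have any tail in $\{\tt{1},\tt{2}\}^\omega$. Passing to a suffix starting with $\tt{0}$, I would write $u = u_1 u_2 u_3 \cdots$, where each $u_i$ begins with $\tt{0}$ and contains no other $\tt{0}$, and aim to show that each $u_i\in\{h(\tt{0}),h(\tt{1}),h(\tt{2})\}=\{\tt{01},\tt{02},\tt{022}\}$.

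Writing $u_i=\tt{0}B_i$ with $B_i\in\{\tt{1},\tt{2}\}^*$, the same length-$4$ obstruction forces $B_i$ to be one of the nine words $\epsilon$, $\tt{1}$, $\tt{2}$, $\tt{12}$, $\tt{21}$, $\tt{22}$, $\tt{121}$, $\tt{122}$, $\tt{221}$. I would dispose of the six unwanted candidates in turn. The case $B_i=\epsilon$ produces $\tt{00}$. For $B_i\in\{\tt{12},\tt{21},\tt{122},\tt{221}\}$ I would show that the corresponding context $\tt{0}B_i\tt{0}$ (namely $\tt{0120}$, $\tt{0210}$, $\tt{01220}$, $\tt{02210}$) is already non-rich, by a direct count of its palindromic factors; each such word contains strictly fewer than $n$ distinct nonempty palindromes in its length~$n$, contradicting the richness of $u$.

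The main obstacle is the remaining case $B_i=\tt{121}$, because the context $\tt{01210}$ is itself rich. Here I would run a short backtracking search forward from $\tt{01210}$, drawing a tree in the style of Figures~\ref{tree1} and~\ref{tree2} whose nodes branch on the next letter, and argue that every branch dies within a few letters: each extension eventually meets one of the forbidden patterns $\tt{00}$, $\tt{11}$, $\tt{212}$, $\tt{10101}$, a cube, or a non-rich factor (the last once more checked by a palindrome count; the critical non-rich words arising in the search are $\tt{012102}$, $\tt{01210102}$, $\tt{01210120}$, $\tt{01210122}$, and $\tt{012101210}$). Since $u$ is infinite, every factor of $u$ is extendable, so if every letter-by-letter extension of $\tt{01210}$ is killed, then $\tt{01210}$ cannot occur in $u$, whence $B_i\neq\tt{121}$. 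Combining the cases, $u_i\in\{\tt{01},\tt{02},\tt{022}\}$ for every $i$, so the chosen suffix of $u$ equals $h(W)$ with $W\in\Sigma_3^\omega$ defined by $h(W_i)=u_i$.
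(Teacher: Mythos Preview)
Your overall strategy---pass to a suffix starting with $\tt{0}$, cut into $\tt{0}$-blocks $u_i=\tt{0}B_i$, and show $B_i\in\{\tt{1},\tt{2},\tt{22}\}$---is exactly the paper's approach, and your treatment of the cases $B_i\in\{\epsilon,\tt{12},\tt{21},\tt{122},\tt{221}\}$ is fine. The problem is your elimination of $B_i=\tt{121}$: two of the words you assert to be non-rich are in fact rich. The word $\tt{012101210}$ is a palindrome, and one checks (e.g.\ via the unioccurrent palindromic suffix criterion) that it is rich; likewise $\tt{01210122}$ has $\tt{22}$ as a unioccurrent palindromic suffix and is rich. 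So your backtracking tree from $\tt{01210}$ has two live branches that you have not closed.

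Both branches can be killed, but only by searching further: after $\tt{01210122}$ the three continuations give $\tt{01220}$ (non-rich), $\tt{1221}\in F$, or $\tt{222}$; after $\tt{012101210}$ one has to go two or three letters deeper before every branch meets $\tt{00}$, $\tt{11}$, $\tt{10101}$, a cube $(\tt{0121})^3$, or a non-rich factor such as $\tt{2102}$, $\tt{210102}$, $\tt{01210120}$, or $\tt{01220}$. So your method is salvageable, but the tree is noticeably larger than you suggest. The paper avoids this by a different (and shorter) device: it first establishes $u_i\in\{\tt{01},\tt{0121},\tt{02},\tt{022}\}$ for all $i$, and then argues at the level of blocks that $u_i=\tt{0121}$ forces $u_{i+1}=\tt{0121}$ (since $\tt{2102}$ is non-rich and the case $u_{i+1}=\tt{01}$ leads to $\tt{10101}$ or the non-rich $\tt{210102}$), whence $u_{i+2}=\tt{0121}$ as well, producing the cube $(\tt{0121})^3$. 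This block-level propagation replaces the deeper letter-by-letter backtracking you attempted.
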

\begin{proof} 
Taking a suffix of $u$ if necessary, write $u=u_1u_2u_3u_4\cdots$, where each $u_i$ starts with \tt{0} and contains no other \tt{0}. It will suffice to show that every $u_i$ is one of \tt{01}, \tt{02} or \tt{022}.  For an arbitrary $i\geq 1$, as in the proof of Lemma~\ref{0110}, we consider the tree of possible prefixes of $u_i\tt{0}$, drawn in Figure~\ref{0-blocks-tree}.  We explain why the word ending at every unboxed leaf of the tree cannot be a factor of $u$.
\begin{itemize}
    \item \tt{00}: The word \tt{00} is in $F$.
    \item \tt{011}: The word \tt{11} is in $F$.
    \item \tt{0120}: The word \tt{0120} is not rich.
    \item \tt{01211}: The word \tt{11} is in $F$.
    \item \tt{01212}: The word \tt{212} is in $F$.
    \item \tt{01220}: The word \tt{01220} is not rich.
    \item \tt{01221}: The word \tt{1221} is in $F$.
    \item \tt{01222}: The word \tt{222} is a cube.
    \item \tt{0210}: The word \tt{0210} is not rich.
    \item \tt{0211}: The word \tt{11} is in $F$.
    \item \tt{0212}: The word \tt{212} is in $F$.
    \item \tt{02210}: The word \tt{02210} is not rich.
    \item \tt{02211}: The word \tt{11} is in $F$.
    \item \tt{02212}: The word \tt{212} is in $F$.
    \item \tt{0222}: The word \tt{222} is a cube.
\end{itemize}

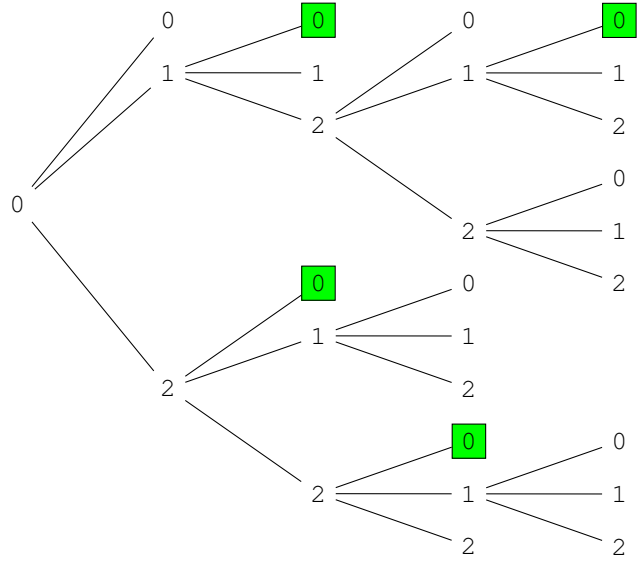
\begin{figure}
\centering
\begin{forest}
[
{\tt{0}}, for tree={grow=0,l=2cm}
    [
    \tt{2}
        [
        \tt{2}
            [
            \tt{2}
            ]
            [
            \tt{1}
            [
                \tt{2}
                ]
                [
                \tt{1}
                ]
                [
                \tt{0}
                ]
            ]
            [
            \tt{0},draw,fill=green
            ]
        ]
        [
        \tt{1}
            [
            \tt{2}
            ]
            [
            \tt{1}
            ]
            [
            \tt{0}
            ]
        ]
        [
        \tt{0},draw,fill=green
        ]
    ]
    [
    \tt{1}
        [
        \tt{2}
            [
            \tt{2}
                [
                \tt{2}
                ]
                [
                \tt{1}
                ]
                [
                \tt{0}
                ]
            ]
            [
            \tt{1}
                [
                \tt{2}
                ]
                [
                \tt{1}
                ]
                [
                \tt{0},draw,fill=green
                ]
            ]
            [
            \tt{0}
            ]
        ]
        [
        \tt{1}
        ]
        [
        \tt{0},draw,fill=green
        ]
    ]
    [
    \tt{0}
    ]
]
\end{forest}
    \caption{The tree showing all possible prefixes of $u_i\tt{0}$.}
    \label{0-blocks-tree}
\end{figure}

Thus, we conclude from Figure~\ref{0-blocks-tree} that $u_i\in\{\tt{01},\tt{0121}, \tt{02}, \tt{022}\}$ for all $i\geq 1$.  Suppose towards a contradiction that for some $i\geq 1$, we have $u_i=\tt{0121}$. Because $u$ does not have the non-rich word \tt{2102} as a factor, we see that $u_{i+1}\ne \tt{02}, \tt{022}$. Suppose that $u_{i+1}=\tt{01}$. Then $u_{i+2}\in\{\tt{01},\tt{0121}, \tt{02}, \tt{022}\}$,
forcing $u$ to contain one of \tt{10101}, or \tt{210102}. However, this is impossible since $\tt{10101}$ is in $F$, and $\tt{210102}$ is not rich. We conclude that $u_{i+1}=\tt{0121}$. By the same argument, $u_{i+2}=\tt{0121}$, and $u$ contains the cube $(\tt{0121})^3$. This is impossible. It follows that we cannot have $u_i=\tt{0121}$, so that $u_i\in\{\tt{01},\tt{02},\tt{022}\}$, as desired.
\end{proof}

Finally, we still need to handle the case that $f(u)$ is good, but $u\in\Sigma_3^*$ does not contain the factor \tt{0110}.

\begin{lemma}\label{No0110}
Suppose $f(u)$ is good for some word $u\in\Sigma_3^\omega$ that does not contain the factor \tt{0110}. Then $u$ has a suffix of the form $h(W)$. 
\end{lemma}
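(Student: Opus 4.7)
The plan is to mirror the strategy of Lemma~\ref{0-blocks}, with the hypothesized forbidden factor \tt{0110} doing the work that \tt{11} did there. By Lemma~\ref{basic} applied to $\phi=f$, the word $u$ is cube-free, rich, and contains \tt{0}; since $f(u)$ is good, $u$ also avoids every factor in Table~\ref{table}. After replacing $u$ by a suffix if necessary, I will write $u=u_1u_2u_3\cdots$, where each $u_i$ starts with \tt{0} and contains no other \tt{0}. The goal is to show that $u_i\in\{\tt{01},\tt{02},\tt{022}\}$ for every $i$, so that $u=h(W)$ with $W$ the word over $\Sigma_3$ encoding the blocks.

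First I will enumerate the possible prefixes of $u_i\tt{0}$ by a tree argument analogous to Figure~\ref{0-blocks-tree}, pruning every branch that contains a cube, a factor from Table~\ref{table}, the non-rich word \tt{0120}, or the factor \tt{0110}. It is the \tt{0110}-hypothesis that kills the otherwise-live branch through $\tt{0110}$; the remaining branches get pruned at $\tt{00}$, $\tt{0111}$, $\tt{0112}$, $\tt{0120}$, $\tt{0122}$, $\tt{01211}$, $\tt{0121211}$, $\tt{0121212}$, $\tt{021}$, $\tt{0221}$, and $\tt{0222}$. This leaves five candidates: $u_i\in\{\tt{01},\tt{02},\tt{022},\tt{0121},\tt{012121}\}$.

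To eliminate $u_i=\tt{0121}$ I will examine the five possible continuations $u_{i+1}$ (using, where necessary, the leading \tt{0} of $u_{i+2}$): the cases $u_{i+1}\in\{\tt{02},\tt{022}\}$ produce the non-rich factor $\tt{012102}$; the cases $u_{i+1}\in\{\tt{0121},\tt{012121}\}$ produce $\tt{0121012}$ (Table~\ref{table} row~2); and $u_{i+1}=\tt{01}$ produces $\tt{0121010}$, which contains $\tt{21010}$ (Table~\ref{table} row~11). For $u_i=\tt{012121}$ the analysis is parallel: $u_{i+1}\in\{\tt{02},\tt{022}\}$ yields the non-rich $\tt{212102}$; $u_{i+1}=\tt{01}$ (with next \tt{0}) yields $\tt{21010}$; $u_{i+1}=\tt{0121}$ (with next \tt{0}) yields $\tt{2101210}$ (Table~\ref{table} row~12); and the remaining case $u_{i+1}=\tt{012121}$ forces, by the same analysis applied to $u_{i+2}$, that $u_{i+2}=\tt{012121}$, producing the cube $(\tt{012121})^3$ and contradicting cube-freeness.

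The main obstacle will be the bookkeeping in the transition analysis, particularly the short non-richness verifications for $\tt{012102}$ and $\tt{212102}$, and correctly locating the relevant Table~\ref{table} factors inside the candidate concatenations. Once these checks are in hand, every $u_i$ lies in $\{\tt{01},\tt{02},\tt{022}\}$, so defining $W\in\Sigma_3^\omega$ by $W_i=\tt{0},\tt{1},\tt{2}$ according to whether $u_i=\tt{01},\tt{02},\tt{022}$ gives $u=h(W)$, as required.
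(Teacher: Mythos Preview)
Your argument is correct, but it takes a genuinely different route from the paper's proof.  The paper does not redo the block analysis: instead it observes that, once one shows $u$ avoids every element of $F=\{\tt{1221},\tt{00},\tt{10101},\tt{212},\tt{11}\}$, Lemma~\ref{0-blocks} applies directly and gives the suffix $h(W)$.  Four of these are immediate (rows~10 and~1 of Table~\ref{table} for \tt{1221} and \tt{00}; the cube in $f(\tt{10101}x)$; and for \tt{11}, rows~8 and~13 plus cube-freeness force \tt{0110}, which is excluded by hypothesis).  The only nontrivial point in the paper is \tt{212}, which is handled by an additional computer backtracking search under the extra constraint that \tt{0110} is forbidden.

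Your approach instead repeats the tree argument of Lemma~\ref{0-blocks} from scratch, with the hypothesis ``no \tt{0110}'' replacing the role of ``no \tt{11}'', and then eliminates the two spurious candidates \tt{0121} and \tt{012121} by a transition analysis that uses only entries already present in Table~\ref{table} (rows~2, 11, 12) together with the non-richness of \tt{2102} (hence of \tt{012102} and \tt{212102}).  The virtue of your route is that it needs no new machine search beyond Table~\ref{table}; the cost is a longer case analysis and some duplication of the work in Lemma~\ref{0-blocks}.  The paper's route is shorter on the page precisely because it reuses Lemma~\ref{0-blocks}, at the price of one extra backtracking computation for \tt{212}.  Both are valid; minor cosmetic points are that in your \tt{012121} analysis the case $u_{i+1}=\tt{0121}$ is redundant (you have already eliminated \tt{0121} globally), and the shorter non-rich witness \tt{2102} would suffice in place of \tt{012102} and \tt{212102}.
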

\begin{proof}
By Lemma~\ref{basic}, we know that $u$ is cube-free and rich, and by
taking a suffix if necessary, we may assume that $u$ begins in \tt{0}.
By Lemma~\ref{0-blocks}, it suffices to show that $u$ does not contain any of the words
in $F$.

\smallskip

\noindent \tt{1221,00}: The words \tt{122} and \tt{00} are in Table~\ref{table}.

\smallskip

\noindent \tt{10101}: Since $f(\tt{0})$ is a prefix of $f(\tt{1})$ and $f(\tt{2})$, the word $f(\tt{10101}x)$ begins with a cube.  Since $f(u)$ is cube-free, we conclude that $u$ cannot contain the factor \tt{10101}.

\smallskip

\noindent \tt{212}: Backtracking by computer as we did to create Table~\ref{table}, with the additional restriction that \tt{0110} is not allowed, one finds that the longest right extension of \tt{212} has length $21$.  Hence \tt{212} is not a factor of $u$.

\smallskip

\noindent \tt{11}: The word \tt{11} cannot be preceded or followed by \tt{1} in $u$, since $u$ is cube-free.  Further, the word \tt{11} cannot be preceded or followed by \tt{2} in $u$, since \tt{112} and \tt{211} are in Table~\ref{table}. However, then if \tt{11} is a factor of $u$, so is \tt{0110}, contrary to assumption.
\end{proof}

We are now ready to prove our structure theorem.

\begin{proof}[Proof of Theorem~\ref{structure}]
The proof is by induction on $n$.  We first establish the base case $n=1$.  By Observation~\ref{obs}, a suffix of $w$ has the form $f(w_0)$ for some word $w_0\in\Sigma_3^\omega$.  If $w_0$ contains the factor \tt{0110}, then by Lemma~\ref{0110}, there is a suffix of $w_0$ that has the form $g(h(w_1)).$  Otherwise, if $w_0$ does not contain the factor \tt{0110}, then by Lemma~\ref{No0110}, there is a suffix of $w_0$ that has the form $h(w_1)$.  Therefore, a suffix of $w$ has the form $f(h(w_1))$ or $f(g(h(w_1)))$ for some $w_1\in\Sigma_3^*$, establishing the base case.

Suppose now that for some $n\geq 1$, a suffix of $w$ has the form $f(h^n(w_n))$ or $f(g(h^n(w_n)))$ for some $w_n\in\Sigma_3^\omega$. By Lemma~\ref{forbidden factors}, there is a suffix of $w_n$ that does not contain any of the factors in $F=\{\tt{1221}, \tt{00}, \tt{10101}, \tt{212}, \tt{11}\}$.  By Lemma~\ref{basic}, we know that $w_n$ is cube-free and rich.  Therefore, by Lemma~\ref{0-blocks}, a suffix of $w_n$ has the form $h(w_{n+1})$ for some $w_{n+1}\in\Sigma_3^\omega$.  We conclude that a suffix of $w$ has the form $f(h^{n+1}(w_{n+1}))$ or $f(g(h^{n+1}(w_{n+1})))$.
\end{proof}

\section{The repetition threshold}

Baranwal and Shallit~\cite{BS19} showed that the word $f(h^\omega(\tt{0}))$ is rich and has critical exponent $2+\sqrt{2}/2$.  They showed both properties using the Walnut theorem prover.  We show that the word $f(g(h^\omega(\tt{0})))$ has the same properties using a different method, which relies heavily on a connection to Sturmian words; it turns out that both $f(h^\omega(\tt{0}))$ and $f(g(h^\omega(\tt{0})))$ are \emph{complementary symmetric Rote words}\footnote{This very useful observation was communicated to us by Edita Pelantov\'a.}.

A word $w\in \Sigma_2^\omega$ is a \emph{complementary symmetric Rote word} if its factorial language is closed under complementation and it has factor complexity $\mathcal{C}(n)=2n$ for all $n\geq 1$.  For any infinite binary word $w = (w_n)_{n \geq 0}$, let $\Delta(w) =((w_n+w_{n+1}) \bmod 2)_{n \geq 0}$, i.e., $\Delta(w)$ is the sequence of first differences of $w$ modulo $2$.  We use the fact that a word $w\in\Sigma_2^\omega$ is a complementary symmetric Rote word if and only if $\Delta(w)$ is a Sturmian word~\cite[Theorem~3]{Rot94}.

Let $u=f(g(h^\omega(\tt{0})))$.  We begin by showing that $\Delta(u)$ is a certain Sturmian word $v$, from which we conclude that $u$ is a complementary symmetric Rote word.  In particular, this implies that $u$ is rich~\cite{BMBLV11}.  We then relate the repetitions in $v$ to those in $u$, and use the theory of repetitions in Sturmian words to establish that the critical exponent of $u$ is $2+\sqrt{2}/2$.  We note that a similar calculation would provide an alternate proof of Baranwal and Shallit's result that the critical exponent of $f(h^\omega(\tt{0}))$ is $2+\sqrt{2}/2$.

Define $\lambda,\mu:\Sigma_3^*\rightarrow\Sigma_2^*$ by
\begin{align*}
    \lambda(\tt{0})&=\tt{0}\\
    \lambda(\tt{1})&=\tt{11}\\
    \lambda(\tt{2})&=\tt{101}\\[5pt]
    \mu(\tt{0})&=\tt{01111}\\
    \mu(\tt{1})&=\tt{01110111}\\
    \mu(\tt{2})&=\tt{0111011110111}
\end{align*}
We extend the map $\Delta$ to finite binary words in the obvious manner in order to prove the following straightforward lemma.

\begin{lemma}\label{DeltaMorphisms}
Let $w\in\Sigma_3^*$.  Then
\begin{enumerate}
    \item $\Delta(f(w)\tt{0})=\lambda(w)$, and
    \item $\Delta(f(g(w))\tt{0})=\mu(w)$.
\end{enumerate}
\end{lemma}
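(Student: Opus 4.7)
The plan is to prove both parts simultaneously by induction on $|w|$, powered by a single elementary concatenation identity for $\Delta$. The identity I will use is: for any $z_1, z_2 \in \Sigma_2^*$ with $z_2$ beginning in $\tt{0}$,
\[
\Delta(z_1 z_2 \tt{0}) = \Delta(z_1 \tt{0}) \cdot \Delta(z_2 \tt{0}).
\]
This is immediate from the definition of $\Delta$: positions $1,\dots,|z_1|-1$ of $\Delta(z_1 z_2 \tt{0})$ record the internal differences of $z_1$; position $|z_1|$ is (last letter of $z_1$) $+$ (first letter of $z_2$) $=$ (last letter of $z_1$) because $z_2$ starts with $\tt{0}$, which completes a copy of $\Delta(z_1\tt{0})$; and the remaining $|z_2|$ positions assemble into $\Delta(z_2\tt{0})$ in the same way.

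The base case $w = \epsilon$ is trivial, since $\Delta(\tt{0}) = \varepsilon = \lambda(\varepsilon) = \mu(\varepsilon)$. For the inductive step I will use the crucial observation that $f(x)$ and $f(g(x))$ both begin with $\tt{0}$ for every letter $x \in \Sigma_3$. Writing $w' = wx$, the concatenation identity then applies to the factorizations $f(w')\tt{0} = f(w) \cdot f(x) \cdot \tt{0}$ and $f(g(w'))\tt{0} = f(g(w)) \cdot f(g(x)) \cdot \tt{0}$, giving
\[
\Delta(f(w')\tt{0}) = \Delta(f(w)\tt{0}) \cdot \Delta(f(x)\tt{0})
\]
and the analogous identity for $f \circ g$.

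This reduces the entire lemma to verifying the single-letter identities $\Delta(f(x)\tt{0}) = \lambda(x)$ and $\Delta(f(g(x))\tt{0}) = \mu(x)$ for each $x \in \{\tt{0},\tt{1},\tt{2}\}$; this is a direct, finite calculation from the definitions of $f$, $g$, $\lambda$, and $\mu$. For example, $f(\tt{2})\tt{0} = \tt{0110}$ has first differences $\tt{101} = \lambda(\tt{2})$, and $f(g(\tt{2}))\tt{0}$ is a binary word of length $14$ whose first differences form the length-$13$ word $\mu(\tt{2})$; the other five cases are similar and equally short.

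There is really no substantive obstacle here, the result being essentially bookkeeping. The one hypothesis that needs vigilance is that $z_2$ begins with $\tt{0}$ in the concatenation identity, and this holds precisely because both $f$ and $f \circ g$ send every letter of $\Sigma_3$ to a word starting with $\tt{0}$; without this uniform property, the appended constant $\tt{0}$ trick in the lemma's statement would not produce a clean morphic description of $\Delta$.
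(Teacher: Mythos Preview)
Your proof is correct and follows essentially the same approach as the paper: induction on $|w|$, reduction to the single-letter case, and verification there by direct computation. The only difference is cosmetic---you state and justify the concatenation identity $\Delta(z_1 z_2\tt{0})=\Delta(z_1\tt{0})\,\Delta(z_2\tt{0})$ for $z_2$ beginning in $\tt{0}$ explicitly, whereas the paper invokes the corresponding equality $\Delta(f(ya)\tt{0})=\Delta(f(y)\tt{0})\,\Delta(f(a)\tt{0})$ without comment.
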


\begin{proof}
One checks that $\Delta(f(a)\tt{0})=\lambda(a)$ and $\Delta(f(g(a))\tt{0})=\mu(a)$ for all $a\in\Sigma_3$. 

For (1), we proceed by induction on the length $n$ of $w$.  When $n=0$, we have $\Delta(f(\varepsilon)\tt{0})=\varepsilon=\lambda(\varepsilon)$, so the statement holds.  Suppose for some $n\geq 0$ that the statement holds for all words $w$ of length $n$.  Let $x$ be a word of length $n+1$.  Then $x=ya$ for some $y\in\Sigma_3^n$ and $a\in\Sigma_3$.  Then $\Delta(f(x)\tt{0})=\Delta(f(y)\tt{0})\Delta(f(a)\tt{0})=\lambda(y)\lambda(a)=\lambda(x).$ 

The proof of (2) is similar.
\end{proof}

Define morphisms $\xi,\eta : \Sigma_2^* \to \Sigma_2^*$ by 
\begin{align*}
    \xi(\tt{0})&=\tt{011}\\
    \xi(\tt{1})&=\tt{01}\\[5pt]
    \eta(\tt{0})&=\tt{011}\\
    \eta(\tt{1})&=\tt{1}.
\end{align*}
Note that both $\xi$ and $\eta$ are Sturmian morphisms (see \cite[Section~2.3]{Lot02}).
By checking the images of all letters in $\Sigma_3$, one verifies that $\lambda\circ h=\xi\circ \lambda$ and $\mu=\eta\circ\xi\circ \lambda$.

\begin{lemma}\label{SturmianMorphisms}
\begin{enumerate}
    \item $\Delta(f(h^\omega(\tt{0})))=\xi^\omega(\tt{0})$
    \item $\Delta(f(g(h^\omega(\tt{0}))))=\eta(\xi^\omega(\tt{0}))$
\end{enumerate}
\end{lemma}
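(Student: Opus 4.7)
The plan is to deduce both parts of the lemma by applying Lemma~\ref{DeltaMorphisms} to the finite prefixes $h^n(\tt{0})$ of $h^\omega(\tt{0})$, exploiting the commutation identities $\lambda\circ h=\xi\circ\lambda$ and $\mu=\eta\circ\xi\circ\lambda$ stated just before the lemma, and then passing to the limit $n\to\infty$.

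For part (1), I would apply Lemma~\ref{DeltaMorphisms}(1) with $w=h^n(\tt{0})$ to obtain
\[
\Delta(f(h^n(\tt{0}))\tt{0})=\lambda(h^n(\tt{0})).
\]
Iterating $\lambda\circ h=\xi\circ\lambda$ yields $\lambda\circ h^n=\xi^n\circ\lambda$, so $\lambda(h^n(\tt{0}))=\xi^n(\lambda(\tt{0}))=\xi^n(\tt{0})$. To turn this finite identity into the claimed equality of infinite words, I need $f(h^n(\tt{0}))\tt{0}$ to be a prefix of $f(h^\omega(\tt{0}))$; equivalently, the letter of $f(h^\omega(\tt{0}))$ immediately after the prefix $f(h^n(\tt{0}))$ must be $\tt{0}$. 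This is automatic: each of $f(\tt{0})$, $f(\tt{1})$, $f(\tt{2})$ begins with $\tt{0}$, so whatever letter of $h^\omega(\tt{0})$ follows $h^n(\tt{0})$, its image under $f$ begins with $\tt{0}$. Since $\Delta$ commutes with taking prefixes, $\xi^n(\tt{0})$ is then a prefix of $\Delta(f(h^\omega(\tt{0})))$ for every $n$, and letting $n\to\infty$ gives $\Delta(f(h^\omega(\tt{0})))=\xi^\omega(\tt{0})$.

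For part (2), the argument is parallel. Applying Lemma~\ref{DeltaMorphisms}(2) with $w=h^n(\tt{0})$ gives
\[
\Delta(f(g(h^n(\tt{0})))\tt{0})=\mu(h^n(\tt{0})).
\]
Using $\mu=\eta\circ\xi\circ\lambda$ together with $\lambda\circ h^n=\xi^n\circ\lambda$, this simplifies to $\eta(\xi(\xi^n(\lambda(\tt{0}))))=\eta(\xi^{n+1}(\tt{0}))$. To promote this to an infinite identity I again need $f(g(h^n(\tt{0})))\tt{0}$ to be a prefix of $f(g(h^\omega(\tt{0})))$, which follows because $g(\tt{0})$, $g(\tt{1})$, $g(\tt{2})$ all begin with $\tt{0}$ and $f(\tt{0})$ begins with $\tt{0}$, forcing the letter immediately after $f(g(h^n(\tt{0})))$ to be $\tt{0}$. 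Hence $\eta(\xi^{n+1}(\tt{0}))$ is a prefix of $\Delta(f(g(h^\omega(\tt{0}))))$ for every $n$, and since $\eta$ is nonerasing, the prefixes $\eta(\xi^{n+1}(\tt{0}))$ exhaust $\eta(\xi^\omega(\tt{0}))$ as $n\to\infty$.

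There is no real obstacle: the content of the argument is entirely contained in Lemma~\ref{DeltaMorphisms} and the two commutation identities, and the only mild subtlety is the finite-to-infinite limit, which reduces to the trivial check that both $f$ and $g$ send every letter to a word beginning with $\tt{0}$.
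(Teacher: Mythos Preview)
Your proposal is correct and follows essentially the same approach as the paper: reduce to the finite identities $\Delta(f(h^n(\tt{0}))\tt{0})=\xi^n(\tt{0})$ and $\Delta(f(g(h^n(\tt{0})))\tt{0})=\eta(\xi^{n+1}(\tt{0}))$ via Lemma~\ref{DeltaMorphisms} and the commutation identities, then take $n\to\infty$. The only difference is that you are more explicit than the paper about why the finite identities pass to the limit (the check that every $f$- and $g$-image begins with \tt{0}), a point the paper leaves implicit.
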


\begin{proof}
For (1), we show that $\Delta(f(h^n(\tt{0}))\tt{0})=\xi^n(\tt{0})$ for every $n\geq 0$.  First of all, we have $\Delta(f(h^n(\tt{0}))\tt{0})=\lambda(h^n(\tt{0}))$ by Lemma~\ref{DeltaMorphisms}, so it suffices to show that $\lambda(h^n(\tt{0}))=\xi^n(\tt{0})$.  We proceed by induction on $n$.  The statement is easily verified when $n=0$.  Suppose for some $n\geq 0$ that $\lambda(h^n(\tt{0}))=\xi^n(\tt{0})$.  Using the fact that $\lambda\circ h=\xi\circ \lambda$, we obtain
\[
\lambda(h^{n+1}(\tt{0}))=\xi(\lambda(h^n(\tt{0})))=\xi(\xi^n(\tt{0}))=\xi^{n+1}(\tt{0}),
\]
which completes the proof of (1).

For (2), we show that $\Delta(f(g(h^n(\tt{0})))\tt{0})=\eta(\xi^{n+1}(\tt{0}))$ for every $n\geq 0$.  By Lemma~\ref{DeltaMorphisms}, we have $\Delta(f(g(h^n(\tt{0})))\tt{0})=\mu(h^n(\tt{0})),$ so it suffices to show that $\mu(h^n(\tt{0}))=\eta(\xi^{n+1}(\tt{0}))$.  Using the facts that $\mu=\eta\circ \xi\circ \lambda$ and $\lambda(h^n(\tt{0}))=\xi^n(\tt{0})$, we obtain
\[
\mu(h^n(\tt{0}))=\eta(\xi(\lambda(h^n(\tt{0}))))=\eta(\xi(\xi^{n}(\tt{0})))=\eta(\xi^{n+1}(\tt{0})),
\]
which completes the proof of (2).
\end{proof}

Since $\xi^\omega(\tt{0})$ and $\eta(\xi^\omega(\tt{0}))$ are Sturmian words, we have proved that $f(h^\omega(\tt{0}))$ and $f(g(h^\omega(\tt{0})))$ are complementary symmetric Rote words. Since all complementary symmetric Rote words are rich \cite[Theorem~25]{BMBLV11}, the following is immediate.

\begin{theorem}
The words $f(h^\omega(\tt{0}))$ and $f(g(h^\omega(\tt{0})))$ are rich.
\end{theorem}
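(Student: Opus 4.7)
The plan is to chain Lemma~\ref{SturmianMorphisms} with two cited structural results: Rote's characterization of complementary symmetric Rote words via first differences, and the theorem that every such word is rich. Essentially all of the real work has been done in the preceding lemma; what remains is a short bookkeeping argument that I would lay out in three concrete steps.

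First, I would invoke Lemma~\ref{SturmianMorphisms} to identify $\Delta(f(h^\omega(\tt{0})))$ with $\xi^\omega(\tt{0})$ and $\Delta(f(g(h^\omega(\tt{0}))))$ with $\eta(\xi^\omega(\tt{0}))$. Second, I would note that $\xi$ is a Sturmian morphism prolongable on \tt{0} (its image $\xi(\tt{0})=\tt{011}$ begins with \tt{0}), so its infinite fixed point $\xi^\omega(\tt{0})$ is a Sturmian word; applying the Sturmian morphism $\eta$ then yields the Sturmian word $\eta(\xi^\omega(\tt{0}))$, using the standard fact that the class of Sturmian words is closed under application of any Sturmian morphism (see \cite[Section~2.3]{Lot02}). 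Third, by Rote's characterization \cite[Theorem~3]{Rot94}, an infinite binary word $w$ is a complementary symmetric Rote word if and only if $\Delta(w)$ is Sturmian. Hence both $f(h^\omega(\tt{0}))$ and $f(g(h^\omega(\tt{0})))$ are complementary symmetric Rote words, and \cite[Theorem~25]{BMBLV11} then delivers richness of both words.

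There is really no obstacle here; the proof is a one-line citation chase once Lemma~\ref{SturmianMorphisms} is in hand. The only point that deserves a moment's attention is ensuring that $\eta(\xi^\omega(\tt{0}))$ is Sturmian rather than merely a morphic image of a Sturmian word, and this is immediate from the standard closure property of Sturmian words under Sturmian morphisms. If desired, the proof could be compressed to a single sentence appealing to Lemma~\ref{SturmianMorphisms}, \cite[Theorem~3]{Rot94}, and \cite[Theorem~25]{BMBLV11} in succession.
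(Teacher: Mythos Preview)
Your proposal is correct and follows essentially the same argument as the paper: use Lemma~\ref{SturmianMorphisms} to compute $\Delta$ of each word, observe that the resulting words are Sturmian (since $\xi$ and $\eta$ are Sturmian morphisms), apply Rote's characterization \cite[Theorem~3]{Rot94} to conclude they are complementary symmetric Rote words, and then invoke \cite[Theorem~25]{BMBLV11} for richness. Your write-up is slightly more explicit about why $\xi^\omega(\tt{0})$ and $\eta(\xi^\omega(\tt{0}))$ are Sturmian, but the logical structure is identical to the paper's.
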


Now we analyze the repetitions in $u=f(g(h^\omega(\tt{0})))$.
Let $v =\Delta(u)= \eta(\xi^\omega(\tt{0}))$ (by Lemma~\ref{SturmianMorphisms}).  The relation between the repetitions in $u$ and those in $v$ is given by the following lemma.

\begin{lemma}\label{reps_utov}
For any infinite binary word $x = (x_n)_{n \geq 0}$, let $y =(y_n)_{n \geq 0} = \Delta(x)$.  If $x$ contains a repetition
\[ 
(x_ix_{i+1} \cdots x_{i+\ell-1})^ex_ix_{i+1} \cdots x_{i+t-1} 
\]
for some positive integers $e \geq 2$, $\ell \geq 1$, and $t \leq \ell$, then $y$ contains a repetition
\[ 
(y_iy_{i+1} \cdots y_{i+\ell-1})^ey_iy_{i+1} \cdots y_{i+t-2} 
\]
where the number of $1$'s in $y_{i}y_{i+1} \cdots y_{i+\ell-1}$ is even.
\end{lemma}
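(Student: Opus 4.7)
The plan is to unpack the definition $y_n = (x_n + x_{n+1}) \bmod 2$ and translate the period condition on $x$ into a period condition on $y$, losing exactly one position of overlap at the tail. The assumed repetition in $x$ says precisely that $x_j = x_{j+\ell}$ for every index $j$ in the range $i \le j \le i + (e-1)\ell + t - 1$.

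The first step is to compute, for any admissible $j$,
\[
y_{j+\ell} - y_j \equiv (x_{j+\ell} - x_j) + (x_{j+\ell+1} - x_{j+1}) \pmod 2.
\]
The first summand vanishes whenever $j$ lies in the period range, and the second summand vanishes whenever $j+1$ does. Intersecting these two conditions forces $y_j = y_{j+\ell}$ for all $j$ with $i \le j \le i + (e-1)\ell + t - 2$. Hence the word $y_i y_{i+1} \cdots y_{i+e\ell+t-2}$ has period $\ell$, which is exactly the claimed repetition $(y_i \cdots y_{i+\ell-1})^e y_i \cdots y_{i+t-2}$ of length $e\ell + t - 1$, i.e.\ one letter shorter than the repetition in $x$.

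For the parity claim, I would telescope:
\[
\sum_{k=0}^{\ell-1} y_{i+k} \equiv \sum_{k=0}^{\ell-1} (x_{i+k} + x_{i+k+1}) \equiv x_i + x_{i+\ell} \pmod 2,
\]
and then invoke $x_i = x_{i+\ell}$, which holds because $i$ lies in the period range (since $e \ge 2$ and $\ell \ge 1$ guarantee $i \le i + (e-1)\ell + t - 1$). The sum is therefore $0 \bmod 2$, so $y_i y_{i+1} \cdots y_{i+\ell-1}$ contains an even number of \tt{1}'s.

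There is no real obstacle here; the only point that deserves a moment of care is checking that the index range over which the period is transferred is nonempty and that the formula $(y_i \cdots y_{i+\ell-1})^e y_i \cdots y_{i+t-2}$ is the correct description of the resulting repetition (in particular when $t = 1$, where the tail after the $e$ full periods is empty). Both follow mechanically from the length calculation $e\ell + t - 1$ and the bound $e \ge 2$.
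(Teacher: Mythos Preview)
Your proof is correct and follows essentially the same approach as the paper. You are more explicit than the paper in deriving the period-$\ell$ structure of $y$ (the paper simply calls this part ``immediate''), but your telescoping computation for the parity of the number of $\tt{1}$'s matches the paper's argument exactly.
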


\begin{proof}
The fact that $y$ contains such a repetition is immediate.  To see that the number of $1$'s in $y_iy_{i+1} \cdots y_{i+\ell-1}$ is even, note first that
\[
\sum_{j=0}^r y_{i+j} \bmod 2 = (x_i + x_{i+r+1}) \bmod 2.
\]
Hence if $x_{i} = x_{i+\ell}$, we have
\[
\sum_{j=0}^{\ell-1} y_{i+j}\bmod 2 =(x_i+x_{i+\ell})\bmod 2=0.
\]
It follows that the number of $1$'s in $y_{i}y_{i+1}\cdots y_{i+\ell-1}$ is even, as required.
\end{proof}

We now analyze the repetitions in $v$.  We first need to review some basic definitions from the theory of Sturmian words and the theory of continued fractions.  Consider a real number $\alpha$ with continued fraction expansion $\alpha = [d_0; d_1, d_2, d_3, \ldots]$, where $d_0=0$ and $d_i$ is a positive integer for all $i>0$.

The \emph{characteristic Sturmian word with slope $\alpha$} (see \cite[Chapter~9]{AS03}) is the infinite word $c_\alpha$ obtained as the limit of the sequence of \emph{standard words} $s_n$ defined by
\[
s_{0} = 0,\quad s_1 = 0^{d_1-1}1,\quad s_n = s_{n-1}^{d_n}s_{n-2},\quad n \geq 2.
\]
For $n \geq 2$, we also define the \emph{semi-standard words}
\[
s_{n,t} = s_{n-1}^ts_{n-2},
\]
for every $1 \leq t < d_n$.  The slope $\alpha$ is the frequency of $1$'s in $c_\alpha$.  It is known that any Sturmian word with the same frequency of $1$'s has the same set of factors as $c_\alpha$.

We also make use of the \emph{convergents} of $\alpha$, namely
\[
\frac{p_n}{q_n} = [0; d_1, d_2, d_3, \ldots, d_n],
\]
where 
\begin{align*}
p_{-2} = 0,\quad p_{-1} = 1,\quad p_n = d_np_{n-1} + p_{n-2} \text{
  for } n \geq 0;\\
q_{-2} = 1,\quad q_{-1} = 0,\quad q_n = d_nq_{n-1} + q_{n-2} \text{
  for } n \geq 0.
\end{align*}
Note that $|s_n| = q_n$ for $n \geq 0$.  We use the well-known fact that $q_{n-1}/q_n=[0;d_n,d_{n-1},\dots,d_1]$.

We can now prove the main theorem concerning the critical exponent of
$u$.

\begin{theorem}\label{rote_ce}
The critical exponent of $u$ is $2+\sqrt{2}/2$.
\end{theorem}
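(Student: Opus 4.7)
The plan is to reduce the computation of the critical exponent of $u=f(g(h^\omega(\tt{0})))$ to a parity-constrained critical-exponent problem for the Sturmian word $v = \Delta(u) = \eta(\xi^\omega(\tt{0}))$, and then analyse the latter via the classical theory of fractional powers of standard and semi-standard words.

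First, I would identify the Sturmian word $v$ explicitly. Since $\xi$ and $\eta$ are Sturmian morphisms with known incidence matrices, the Perron eigenvalue of the matrix of $\xi$ is $1+\sqrt{2}$, so $\xi^\omega(\tt{0})$ has $\tt{1}$-frequency $2-\sqrt{2}$; applying $\eta$ then shows that $v$ has $\tt{1}$-frequency $(4+\sqrt{2})/7$ with continued fraction expansion $[0;1,3,\overline{2}]$. Writing $q_n$ for the convergent denominators, the recurrence $q_n = 2q_{n-1}+q_{n-2}$ holds from index $n=3$ on, whence $q_{n-1}/q_n\to\sqrt{2}-1$ and $(q_{n-1}+q_{n-2})/q_n\to 2-\sqrt{2}$.

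Next, I would use the standard and semi-standard words $s_n = s_{n-1}^{d_n}s_{n-2}$ and $s_{n,k}=s_{n-1}^k s_{n-2}$ to enumerate long repetitions in $v$: the factors of $v$ coincide with those of the characteristic Sturmian word $c_\alpha$, and all sufficiently long repetitions in $c_\alpha$ are, up to conjugacy, fractional powers of standard or semi-standard words extended by a prefix of the period. The parity condition in Lemma~\ref{reps_utov} selects precisely those periods that lift to repetitions in $u$: a short induction on the recurrences shows that $|s_n|_1$ is odd for every $n\geq 1$, whereas $|s_{n,1}|_1 = |s_{n-1}|_1+|s_{n-2}|_1$ is even for every $n\geq 3$. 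Thus the relevant parity-valid periods are essentially the semi-standard words $s_{n,1}$, together with at most finitely many short exceptions that can be treated by hand.

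Finally, I would transfer back to $u$. Lemma~\ref{reps_utov} directly implies that any repetition in $u$ of period $\ell$ and exponent $e$ descends to a repetition in $v$ of exponent $e-1/\ell$ at the same period; combined with the analysis above, this yields the upper bound $E(u)\leq 2+\sqrt{2}/2$. For the lower bound I would lift the explicit sequence $s_{n,1}^{2+r_n/|s_{n,1}|}$ from $v$ back to $u$ (each lift adds $1/|s_{n,1}|$ to the exponent), where $r_n$ is the maximal extension of $s_{n,1}^2$ in $c_\alpha$ by a prefix of $s_{n,1}$; a routine calculation gives $r_n/q_n\to\sqrt{2}-1$, so the lifted exponents tend to $2+(\sqrt{2}-1)/(2-\sqrt{2}) = 2+\sqrt{2}/2$, matching the upper bound. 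The main obstacle is the parity analysis together with identifying $s_{n,1}$ as the correct family of periods: one must verify, for every other parity-valid candidate (short sporadic periods and conjugates of standard words), that the resulting lifted exponents do not exceed $2+\sqrt{2}/2$.
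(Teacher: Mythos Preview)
Your plan is the paper's own argument: descend via Lemma~\ref{reps_utov} to a parity-constrained repetition problem in the Sturmian image, classify the candidate periods as conjugates of standard or semi-standard words, use the parity to discard the standard family, and compute the limiting exponent over the surviving semi-standard family $s_{n,1}$.

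Two points are worth tightening. First, the paper does not work with your slope $(4+\sqrt{2})/7=[0;1,3,\overline{2}]$ directly; it complements $v$ to $\bar v$ and obtains slope $(3-\sqrt{2})/7=[0;4,\overline{2}]$, so that $d_k=2$ for every $k\ge 2$ and the \emph{only} semi-standard words are $s_{k,1}$ --- this removes your extra level-$2$ words $s_{2,1},s_{2,2}$ and the associated case analysis. Second, and more substantively, your phrase ``finitely many short exceptions'' does not cover the case where the descended period $z$ in $v$ is \emph{non-primitive}: for every $n$, a conjugate of $s_n^2$ has even $\tt{1}$-count and hence is parity-valid, and these periods are arbitrarily long. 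The paper disposes of this case in one line: if $z$ is non-primitive then $z=w^2$ (anything higher would already force the $v$-exponent past the Sturmian critical exponent $3+\sqrt{2}$), whence the $v$-exponent of $z^ez'$ is at most $(3+\sqrt{2})/2$, and the lifted $u$-exponent is below $2+\sqrt{2}/2$. With that case added, your outline is complete and matches the paper.
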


\begin{proof}
Let $\bar{\xi} : \Sigma_2^* \to \Sigma_2^*$ be the Sturmian morphism
defined by $\tt{0} \to \tt{01}$, and $\tt{1} \to \tt{001}$.  Let
$\bar{\eta} : \Sigma_2^* \to \Sigma_2^*$ be the Sturmian morphism
defined by $\tt{0} \to \tt{0}$ and $\tt{1} \to \tt{001}$.  Let
$\bar{v} = \bar{\eta}(\bar{\xi}^\omega(\tt{0}))$.  The morphisms
$\bar{\xi}$ and $\bar{\eta}$ are obtained by conjugating and
complementing $\xi$ and $\eta$, so the factors of $\bar{v}$ are
exactly the complements of the factors of $v$.  Clearly, the periods
and exponents of the repetitions in $v$ and $\bar{v}$ are identical,
so we analyze the repetitions in $\bar{v}$ instead.  To analyze the
repetitions in $\bar{v}$ it suffices to consider the repetitions in
the characteristic word with the same slope as $\bar{v}$.

The matrix of $\bar{\xi}$ is $M_{\bar{\xi}} =
\left(\begin{matrix}1&2\\1&1\end{matrix}\right)$ and the matrix of
  $\bar{\eta}$ is $M_{\bar{\eta}} =
  \left(\begin{matrix}1&2\\0&1\end{matrix}\right)$.  The frequency
    vector of $\tt{0}$'s and $\tt{1}$'s in $\xi^\omega(\tt{0})$ is the
    normalized eigenvector ${\bf v}$ of $M_{\bar{\xi}}$ corresponding
    to the dominant eigenvalue $1+\sqrt{2}$.  We have ${\bf v} =
    (2-\sqrt{2}, \sqrt{2}-1)^T$.  We then compute $M_{\bar{\eta}}{\bf
      v}$ and normalize to find that the frequency of $1$'s in
    $\bar{v}$ is $\alpha = (3-\sqrt{2})/7$.  We therefore consider the
    characteristic word $c_\alpha$ with slope $\alpha$ in place of
    $\bar{v}$.

Since $\alpha = [0; 4, \overline{2}]$, we see that
$c_\alpha$ is the infinite word obtained as the limit of the sequence
of standard words $s_k$ defined by
\[
s_0 = \tt{0},\quad s_1 = s_0^{4-1}\tt{1},\quad s_k = s_{k-1}^2s_{k-2},\quad k \geq 2
\]
We have $s_1=\tt{0001}$, $s_2 = \tt{000100010}$, $s_3 = \tt{0001000100001000100001}$,
etc.  We will also need the semi-standard words
\[
s_{k,1} = s_{k-1}s_{k-2},\quad k \geq 2.
\]
Note that the number of $\tt{0}$'s in $s_k$ is always odd and the number of
$\tt{0}$'s in $s_{k,1}$ is always even.  Also note that
by~\cite[Proposition~4.6.12]{Pel16}, the critical exponent of
$c_\alpha$ is $3+\sqrt{2}$.  Write $c_\alpha = (c_n)_{n \geq 0}$.

Now suppose that $u$ contains a repetition
\[ 
y^ey'=(u_iu_{i+1} \cdots u_{i+\ell-1})^eu_iu_{i+1} \cdots u_{i+t-1} 
\]
for some positive integers $e \geq 2$, $\ell \geq 1$, and $t \leq \ell$.  By Lemma~\ref{reps_utov}, we see that $v$ contains a repetition
\[ 
(v_iv_{i+1} \cdots v_{i+\ell-1})^ev_iv_{i+1} \cdots v_{i+t-2}, 
\]
where the number of $1$'s in $v_{i+1} \cdots v_{i+\ell-1}v_\ell$ is even.  It follows that $\bar{v}$, and hence $c_\alpha$, contains a repetition
\[ 
z^ez' = (c_jc_{j+1} \cdots c_{j+\ell-1})^ec_jc_{j+1} \cdots c_{j+t-2},
\]
where the number of $0$'s in $z$ is even.  The remainder of the argument is very similar to that of~\cite[Proposition~6]{RSV19}.

Suppose that $z$ is not primitive.  Since the critical exponent of
$c_\alpha$ is $3+\sqrt{2}$, the exponent of $z$ cannot be greater than
$2$.  Thus $z$ is a square, and we get that the exponent of $z^ez'$ is at
most
\[
\frac{3+\sqrt{2}}{2} < 2 + \frac{\sqrt{2}}{2}.
\]

So we may assume that $z$ is primitive.  By \cite[Corollary~4.6]{Pel15}
(originally due to Damanik and Lenz \cite{DL02}), the word $z$ is either a
conjugate of one of the standard words $s_k$, or a
conjugate of one of the semi-standard words $s_{k,1}$.  However, $s_k$
has an odd number of $\tt{0}$'s, so this case is ruled out.

Thus we may assume that $z$ is a conjugate of $s_{k,1}$ for some $k\geq 2$.  Hence $|z| = q_{k-2}+q_{k-1}$ for some
$k \geq 2$.  From~\cite[Theorem 4(i)]{Jus01}, one finds that the
longest factor of $c_\alpha$ with period $q_{k-2}+q_{k-1}$ has length
$2(q_{k-2}+q_{k-1})+q_{k-1}-2$.  It follows that $z^ez'$ has exponent
at most
\[
\frac{2(q_{k-2}+q_{k-1})+q_{k-1}-2}{q_{k-2}+q_{k-1}}
\]
for some $k\geq 2$.  In turn, it must be the case that $y^ey'$ has exponent
\begin{align}
E_k&= \frac{2(q_{k-2}+q_{k-1})+q_{k-1}-1}{q_{k-2}+q_{k-1}} \nonumber\\
&= 2 + \frac{q_{k-1}-1}{q_{k-2}+q_{k-1}}\label{semistd_exp_1}\\
&= 2 + \frac{1-1/q_{k-1}}{1+q_{k-2}/q_{k-1}} \label{semistd_exp_2}
\end{align}
for some $k\geq 2$.  

We claim that $\lim_{k\rightarrow\infty}E_k=2+\sqrt{2}/2$, and that the sequence $(E_k)_{k\geq 2}$ is increasing.  It follows that the exponent of $y^ey'$ is at most $2+\sqrt{2}/2$.  Moreover, by the discussion above, the word $u$ has a factor of exponent $E_k$ for every $k\geq 2$.  Thus, we conclude from the claim that $u$ has critical exponent $2+\sqrt{2}/2$.  We now complete the proof of the claim.

First we show that $\lim_{k\rightarrow\infty}E_k=2+\sqrt{2}/2$. Since $q_{k-2}/q_{k-1}=[0;\underbrace{2,2,\dots,2}_{k-2},4],$ we see immediately that $\lim_{k\rightarrow \infty} q_{k-2}/q_{k-1}=[0;\overline{2}]=\sqrt{2}-1.$  From~\eqref{semistd_exp_2}, we obtain
\[
\lim_{k\rightarrow\infty}E_k=2+\sqrt{2}/2.
\]

Finally, we show that the sequence $(E_k)_{k\geq 2}$ is increasing.  Let $k\geq 2$.  Starting from~\eqref{semistd_exp_1}, using algebra and the recursion $q_k=2q_{k-1}+q_{k-2}$, one finds that $E_{k+1}>E_k$ if and only if
\begin{align}\label{inc}
2q_{k-1}>q_{k-1}^2-q_kq_{k-2}.
\end{align}
When $k=2$, we have $q_{k-1}^2-q_{k}q_{k-2}=4^2-9\cdot 1=7.$  Suppose for some $k\geq 2$ that $q_{k-1}^2-q_kq_{k-2}=7(-1)^k$.  Then
\begin{align*}
q_{k}^2-q_{k+1}q_{k-1}&=(2q_{k-1}+q_{k-2})q_{k}-(2q_k+q_{k-1})q_{k-1}=q_{k-2}q_k-q_{k-1}^2=7(-1)^{k+1}.
\end{align*}
Thus, by mathematical induction, we have $q_{k-1}^2-q_kq_{k-2}=7(-1)^k$ for all $k\geq 2$.  In particular, the right-hand side of~\eqref{inc} is at most $7$ for all $k\geq 2$.  Since $q_{k-1}\geq q_1=4$ for all $k\geq 2$, we conclude that~\eqref{inc} is satisfied for all $k\geq 2$.  Therefore, we have $E_{k+1}>E_k$ for all $k\geq 2$.

This completes the proof of the claim, and hence the theorem.
\end{proof}

Since $f(h^\omega(\tt{0}))$ and $f(g(h^\omega(\tt{0})))$ both have critical exponent $2+\sqrt{2}/2$, Theorem~\ref{threshold} now follows immediately from Theorem~\ref{structure}.


\section{Future Prospects}

For $k\geq 3$, it remains an open problem to determine the repetition threshold $\RRT(k)$ for the language of rich words on $k$ letters.  In fact, we even lack a conjecture for the value of $\RRT(k)$ in these cases.  Baranwal and Shallit~\cite{BS19} have established that $\RRT(3)\geq 9/4$, but did not explicitly conjecture that $\RRT(3)=9/4.$

\acknowledgements
After reading an early draft of this work, Edita Pelantov\'a pointed
out to us that $f(h^\omega(\tt{0}))$ and $f(g(h^\omega(\tt{0})))$ are
complementary symmetric Rote words.  We are very grateful to her for
this observation, as it allowed us to prove some stronger results than
were in our original draft.  We take this opportunity to refer the
reader to the recent manuscript by Medkov\'a, Pelantov\'a, and Vuillon
\cite{MPV18} for more results on complementary symmetric Rote words.
We thank Jeffrey Shallit for his comments on our earlier draft as
well.  We also thank the referee, who provided some helpful comments.

\bibliographystyle{abbrvnat}


\end{document}